\documentclass[11pt]{amsart}
\usepackage{amsfonts,amssymb,amsmath,amsthm,here}

\usepackage{color}  
\usepackage{array}

\setlength{\topmargin}{-10pt}

\setlength{\oddsidemargin}{-5mm}
\setlength{\evensidemargin}{-5mm}
\setlength{\textheight}{220mm}

\setlength{\textwidth}{170mm}

\theoremstyle{plain}
\newtheorem{theo}{Theorem}

\newtheorem{cor}{Corollary}
\newtheorem{lem}{Lemma}
\newtheorem{prop}{Proposition}

\theoremstyle{definition}

\newtheorem{rem}{Remark}

\makeatletter

\@addtoreset{equation}{section}
\makeatother

\makeatletter
\@namedef{subjclassname@2020}{
  \textup{2020} Mathematics Subject Classification}
\makeatother

\def \Z{\mathbb Z}
\def \Q{\mathbb Q}

\def \C{\mathbb C}
\def \calO{\mathcal O}
\def \calA{\mathcal A}

\def \calD{\mathcal D}

\def \frakX{\mathfrak X}

\def \Ker{\mathop\mathrm{Ker}}
\def \Gal{\mathrm{Gal}}
\def \Tr{\mathrm{Tr}}

\allowdisplaybreaks[4]

\author{Miho Aoki }
\address{Department of Mathematics,
Interdisciplinary Faculty of Science and Engineering,
Shimane University,
Matsue, Shimane, 690-8504, Japan}
\email{aoki@riko.shimane-u.ac.jp}

\subjclass[2020]{Primary 11R04, 11R16,  Secondary 11C08, 11R80. }
\keywords{the ring of integers, simplest cubic fields, Shanks' cubic polynomial}
\thanks{This work was supported by JSPS KAKENHI Grant Number JP21K03181} 

\title[Cyclic cubic fields]{Galois module structure of algebraic integers \\  of cyclic cubic fields}

\begin{document}

\begin{abstract} 
We determine the Galois module structure of the ring of integers for all cyclic 
cubic fields using roots of the generic cyclic cubic polynomial $f_n(X)=X^3-nX^2-(n+3)X-1$. Let $L_n=\Q(\rho_n)$ be a cyclic cubic field with Galois group $G:=
\Gal (L_n/\Q)$, where $\rho_n$ is a root of $f_n (X)$, and $\calO_{L_n}$ the
ring
of integers of $L_n$. We  explicitly give the generator of the free module $\calO_{L_n}$
of rank $1$ over the associated order $\calA_{L_n/\Q}:= \{ x\in \Q [G] \, |\,
x\, \calO_{L_n} \subset \calO_{L_n} \}$ by using the roots of $f_n(X)$.
\end{abstract}
\maketitle
\section{Introduction}\label{sec:intro}

Let $L$ be a number field with Galois group $G=\mathrm{Gal}(L/\Q)$. The ring of integers $\calO_L$ of $L$ is a free $\Z$-module
of rank $[L:\Q]$ whose basis is called {\it an integral basis}. It is a simple but important problem of algebraic number theory
 to find an integral basis for a number field.
On the other hand, the ring of integers has a $\Z [G]$-module structure since the Galois group acts on it.
Furthermore, we  can  consider the module structure over {\it the associated order} $\calA_{L/\Q} := \{ x\in \Q [G]\, | \, x \calO_L \subset \calO_L \}$
which contains $\Z [G]$ as a subring. Finding a minimal generating set as modules over these rings is also important problem.
For an abelian number filed $L$, a conclusive result has been obtained by Leopoldt \cite{Leo} (see also \cite{Le}).  It  was shown that the ring of integers is generated by
Gaussian periods over $\Z [G]$ and a free module of rank $1$ over $\calA_{L/\Q}$ generated by a sum of Gaussian periods. For
example, in the case of quadratic fields $L$ with discriminant $D_L$, the ring of integers has the following Galois module structure:
\[
\calO_L=\begin{cases}
\Z [G] \, \eta_{\mathfrak f_L} =\calA_{L/\Q} \, \eta_{\mathfrak f_L} & \text{if $2\nmid D_L$\, (tamely ramified)}, \\
\Z [G]\,  \eta_{\mathfrak f_L}  \oplus \Z =\calA_{L/\Q} (\eta_{\mathfrak f_L}+1) & \text{if $4 | D_L$\, (wildly ramified)},
\end{cases}
\]
where $\mathfrak f_L=|D_L|$ is the conductor of $L$,  and $\eta_{\mathfrak f_L}:= \mathrm{Tr}_{\Q (\zeta_{\mathfrak f_L})/L}
(\zeta_{\mathfrak f_L})$ is {\it a Gaussian period} which is the trace from $\Q (\zeta_{\mathfrak f_L})$ to $L$ 
of a primitive $\mathfrak f_L$th root of unity.
However,  by using a root of the defining polynomial $X^2-D_L$,
the Galois module structure is generally known as follows:
\[
\calO_L=\begin{cases}
\Z [G] \, \frac{1+\sqrt{D_L}}{2} =\calA_{L/\Q} \, \frac{1+\sqrt{D_L}}{2} & \text{if $2\nmid D_L$}, \\
&  \\
\Z [G]\,   \frac{\sqrt{D_L}}{2}  \oplus \Z =\calA_{L/\Q} (\frac{\sqrt{D_L}}{2}+1) & \text{if $4 | D_L$}.
\end{cases}
\]

In this paper, we give the Galois module structure as described above  for cyclic cubic fields using the roots of the generic cyclic cubic polynomial.
Let $n$ be a rational number.
We consider the generic cyclic cubic polynomial defined by
\begin{equation}\label{eq:fn}
f_n(X)=X^3-nX^2-(n+3)X-1
\end{equation}
(see \cite[chap.~1]{Se}).
Assume that $f_n(X)$ is irreducible over $\Q$.
For a root $\rho_n$ of $f_n(X)$, let
$L_n=\Q(\rho_n)$ be the cyclic cubic field.
If $n\in \Z$, then $f_n (X)$ is always irreducible, and the field $L_n$ is known as 
{\it the  simplest cubic field} (see \cite{S} for details).
Let $\sigma$ be the generator of $\Gal(L_n/\Q)$ that satisfies
$\sigma (\rho_n)=-1/(1+\rho_n)$.
For any $x\in L_n$, we put 
$x':=\sigma (x)$ and $x'':=\sigma^2 (x)$.
We have  $L_n=L_{-n-3}$ as $f_n(X)=-X^3f_{-n-3}(1/X)$.
We express $n=n_1/n_2$ where the integers $n_1$ and $n_2$ are coprime (the choice of
$(n_1,n_2)$ can be either $(n_1,n_2)$ or $(-n_1,-n_2)$). Put $\Delta_n=n_1^2+3n_1n_2 +9n_2^2
=de^2c^3=A_nA_n'$, where $d,e,c \in \Z_{>0}$ with $d$ and $e$
being square-free and $(d,e)=1$, and $A_n=n_1+3n_2(1+\zeta_3),\, A_n'=n_1+3n_2 (1+\zeta_3^2)$.
The discriminant of $f_n(X)$ is $d(f_n)=\Delta_n^2/n_2^4=(n^2+3n+9)^2 $.

If $3\nmid n_1$, or $n_1=3t\, (t\in \Z)$ and $n_2 \equiv -2t \pmod{9}$, then $L_n/\Q$ is tamely ramified (cf. Lemma~\ref{lem:m} and (\ref{eq:f})).
In this case, the Galois module structure of the ring of integers is given by the following theorem:
\begin{theo}[=Theorem~\ref{theo:tame}]\label{theo:main1}
Let $n=n_1/n_2$ be a rational number where the integers $n_1$ and $n_2$ are coprime.
Suppose that the cubic polynomial $f_n(X)$ is irreducible over $\Q$, and $3\nmid n_1$ or
$n_1=3t\, (t\in \Z)$, $n_2 \equiv -2t \pmod{9}$. 
There exist integers $a_0$ and $a_1$  that satisfy $ec=a_0^2-a_0a_1+a_1^2$ and
$a_0+a_1\zeta_3 $ divides $ A_n$ in $\Z [\zeta_3]$.
Furthermore, for any such $a_0$ and $a_1$, we  have $(\varepsilon ec^2-n_1(a_0+a_1))/3\in \Z$ for $\varepsilon \in \{ \pm 1\} $ given by
\[
\varepsilon: =
\begin{cases}
\left( \frac{n_1 (a_0+a_1)}{3} \right) & \text{if $3\nmid n_1$}, \\
\left( \frac{n_2a_0}{3} \right) & \text{if $n_1=3t\, (t\in \Z),\, n_2\equiv -2t \!\! \pmod{9}$}, 
\end{cases}
\]
where $\left( \frac{\cdot}{3} \right)$ is the Legendre symbol, and 
\begin{equation*}
\alpha :=\frac{1}{ec^2}\left( n_2(a_0\rho_n+a_1\rho_n')+\frac{1}{3} (\varepsilon ec^2-n_1(a_0+a_1) ) \right)
\end{equation*}
is a generator of a normal integral basis of the cyclic cubic field $L_n$, namely we have
$\calO_{L_n}=\Z [G]\, \alpha =\calA_{L_n/\Q} \, \alpha$.
\end{theo}

If $n_1=3t\, (t\in \Z)$ and $n_2 \not\equiv -2t \pmod{9}$, then $L_n/\Q$ is wildly ramified (cf. Lemma~\ref{lem:m} and (\ref{eq:f})).
In this case, the Galois module structure of the ring of integers is given by the following theorem.
\begin{theo}[=Theorem~\ref{theo:wild}]\label{theo:main2}
Let $n=n_1/n_2$ be a rational number where the integers $n_1$ and $n_2$ are coprime.
Suppose that the cubic polynomial $f_n(X)$ is irreducible over $\Q$, and $n_1=3t\, (t\in \Z)$, $n_2 \not\equiv -2t \pmod{9}$. There exist integers $a_0$ and $a_1 $  that satisfy
\[ec=
\begin{cases}
a_0^2-a_0a_1+a_1^2 &  \text{ if $t\equiv n_2 \!\! \pmod{3}$}, \\
3(a_0^2-a_0a_1+a_1^2)  &   \text{ if $t\not\equiv n_2 \!\! \pmod{3}$},
\end{cases}
\]
and
$a_0+a_1 \zeta_3 $ divides $A_n$ in $\Z [\zeta_3]$. Furthermore, for any such $a_0,\, a_1$ and 
\[
\alpha :=\frac{1}{ec^2}(3n_2 (a_0\rho_n +a_1 \rho'_n)-n_1(a_0+a_1)),
\]
we have $\alpha \in {\bf e}_{\mathfrak f_{L_n}}\calO_{L_n}$ and 
\[
\calO_{L_n}=\Z [G]\, \alpha \oplus \Z =\calA_{L_n/\Q}  (\alpha+1),
\]
where ${\bf e}_{\mathfrak f_{L_n}}=(2-\sigma-\sigma^2)/3$.
\end{theo}

Using the results of this paper, the author \cite{A1,A2} gave a linear relation between a cubic Gaussian period and a root of the generic cyclic cubic polynomial $f_n(X)$ generally,
and derived the minimal polynomial of the cubic Gaussian period which was given by Gauss and Hasse \cite{G,H}.

For the proofs of the Theorems~\ref{theo:main1} and \ref{theo:main2}, we use
the method proposed by Acciaro and Fieker \cite{AF}, which constructs a normal integral basis from a normal 
basis and an integral basis for a tamely ramified case.
Using Leopoldt's theorem, 
we decompose the ring of integers into direct sums as $\Z [G]$-modules and then use Acciaro and Fieker's method 
(including wildly ramified cases) for the direct summands.
In order to use their method, we shall first  explicitly give an integral basis of a cyclic cubic field using roots of  the generic cyclic cubic
polynomial  (Proposition~\ref{prop:IB}) in \S \ref{sec:IB}. We use  a result of Albert \cite{Al} that gives an integral basis of a cubic
field defined by a polynomial $X^3+aX+b$ with $a,b\in \Z$.
In \S~\ref{sec:BA}, we shall recall the result given by Leopoldt, which gives the Galois module structure of the ring of integers 
for abelian number fields. In \S~\ref{sec:tame}, we shall consider the tamely ramified cases,  prove Theorem~\ref{theo:main1} and
give some corollaries (Corollaries~\ref{cor:tame1} and \ref{cor:tame2}) for the special cases. 
In \S~\ref{sec:wild}, we shall consider the wildly ramified cases, prove Theorem~\ref{theo:main2} and give some corollaries 
 (Corollaries~\ref{cor:wild1} and \ref{cor:wild2}) for the special cases. 
In \S~\ref{sec:ex}, we give some numerical examples (Tables~\ref{table:2}-\ref{table:11}) by using Magma \cite{BCP}.


%
%
%
\section{Integral Basis}\label{sec:IB}

 In this section, we give an integral basis of $L_n$. We use the results of Albert (\cite{Al}) that
explicitly give an integral basis of a cubic field.
Consider a cubic polynomial
\begin{equation}\label{eq:f(X)}
f(X)=X^3+aX+b \quad (a,b\in \Z),
\end{equation}
that is irreducible over $\Q$. We assume that there is no prime number $p$ such that $p^2 |a$ and $p^3 |b$.
The discriminant of $f(X)$ is $d(f)=-\Delta$ with $\Delta :=4a^3+27b^2$. 
Let $v_p(x)$ denote the normalized $p$-adic valuation of $x\in \Q$ for a prime number $p$.
Let $P$ be the largest integer divisor of $a$ such that $(P,6)=1$ and $P^2 |b$. From our
assumption on $a$ and $b$, the integer $P$ must be  a product of distinct primes.
Put $Q= \prod_{(p,6a)=1} p^{ \lfloor v_p(\Delta)/2 \rfloor }$, where the product runs over all 
prime numbers $p$ satisfying $(p,6a)=1$.
Then, we can write 
\begin{equation}\label{eq:Delta}
\Delta =4a^3+27b^2=2^{2\lambda} 3^{2\mu} P^2 Q^2 \Delta',
\end{equation}
where $\lambda, \mu \in Z_{\geq 0}$, and $\Delta'$ is the largest integer divisor of $\Delta /(PQ)^2$ satisfying
$4 \nmid \Delta'$ and $9\nmid \Delta'$ (\cite[Theorem~2]{Al}). Let $\theta$ be a root of $f(X)$.
Albert gave an integral basis of $\Q(\theta)$ for both cases where $a\equiv 6 \pmod{9},\, b\not\equiv 0 \pmod{3}$, and $\mu >2$ hold (
\cite[Theorem~3]{Al}), and where it does not hold (\cite[Theorem~4]{Al}). Let $\psi (x)=\varphi (x)-1$ for $x\in \Z_{>0}$, where 
$\varphi (x)$ is the Euler's totient function.

First, assume that $a\equiv 6\pmod{9},\, b\not\equiv 0\pmod{3}$, and $\mu >2$ hold, then an integral basis of $\Q (\theta )$
is given by
\begin{equation}\label{eq:Albert-IB1}
\left\{ 
1, \frac{-r+\theta}{3} ,\ \frac{r^2+a+r\theta+\theta^2}{2^{\varepsilon_2}3^{\mu-1} PQ}
\right\},
\end{equation}
where the integers $\varepsilon_2 $ and $r$ are given as follows. Put $a=3a_1$ with $a_1 \in \Z$.

If $b=2b_1 (b_1 \in \Z),\, b_1 \equiv 1 \pmod{2},\, a\equiv 1 \pmod{4}$, and $\Delta' \equiv 3 \pmod{4}$,
then we have
\begin{align}
\varepsilon_2 & =\lambda,  \label{eq:Albert-1-1}\\
r & =-2b_1 [ (2^{\lambda +1} 3^{\mu} a_1)^{\psi (Q)} 3^{\mu} +(2^{ \lambda +1 } Qa_1)^{\psi (3^{\mu-1})}Q ]2^{\lambda}
+(2^{\lambda -1}-3b_1)(a_1 Q 3^{\mu})^{\psi (2^{\lambda}) } \cdot Q\cdot 3^{\mu-1} \notag
\end{align}

If $b=2b_1 (b_1 \in \Z),\, b_1 \equiv 1 \pmod{2},\, a\equiv 1 \pmod{4}$, and $\Delta' \not\equiv 3 \pmod{4}$,
then we have
\begin{align}
\varepsilon_2 & =\lambda -1, \notag \\
r & =-2^{\lambda} b_1 [ 3^{\mu} (2^{\lambda}  3^{\mu}  a_1)^{\psi (Q)} + Q(2^{\lambda} Qa_1)^{\psi (3^{\mu-1})}]
-3^{\mu}  b_1 Q (3^{\mu} Qa_1)^{\psi (2^{\lambda-1})}. \label{eq:Albert-1-2}
\end{align}

If $b\equiv 0 \pmod{4}$ and $a\not\equiv 1 \pmod{4}$, 
then we have
\begin{align}
\varepsilon_2 & =1, \notag \\
r & =-b [ (2 a_1  3^{\mu}  )^{\psi (Q)} 3^{\mu} + Q(2a_1Q)^{\psi (3^{\mu-1})}]
+3^{\mu}  a_1 Q. \label{eq:Albert-1-3}
\end{align}

For all other cases, we have
\begin{align} 
\varepsilon_2 & =0, \notag \\
r & =-b [ (2 a_1  3^{\mu}  )^{\psi (Q)} 3^{\mu} + Q(2a_1Q)^{\psi (3^{\mu-1})}]. \label{eq:Albert-1-4}
\end{align}

Next, assume  that the conditions $a\equiv 6 \pmod{9},\, b\not\equiv 0 \pmod{3}$, and $\mu >2 $ are not simultaneously
satisfied. Then, an integral basis of $\Q (\theta)$ is given by
\begin{equation}\label{eq:Albert-IB2}
\left\{ 1,\ \theta, \ \frac{r^2+a+r\theta +\theta^2}{3^{\varepsilon_1}  2^{\varepsilon_2} PQ} \right\},
\end{equation}
where 
\begin{equation}\label{eq:Albert-2-1}
\varepsilon_1 =\begin{cases}
1, & \text{ if $b\equiv 0 \pmod{9},\ a\equiv 0 \pmod{3}$ or $b\not\equiv 0 \pmod{3},\, b^2+a-1 \equiv 0 \pmod{9} $}, \\
0, & \text{otherwise},
\end{cases}
\end{equation}
and $\varepsilon_2$ and $r$ are given as follows. In all cases, we define $a^{\psi (Q)} =0$ when $a=0$
(hence $Q=1$ and $\psi (Q)=0$).

If $b=2b_1 (b_1 \in \Z),\, b_1 \equiv 1 \pmod{2},\, a\equiv 1 \pmod{4}$, and $\Delta' \equiv 3 \pmod{4}$,
then we have
\begin{align}
\varepsilon_2 & =\lambda,  \label{eq:Albert-2-2}\\
r & =-3\cdot 2^{\lambda} \cdot b  (2^{\lambda +1} a)^{\psi (Q)}+ 3Q(2^{ \lambda -1 } -3b_1)
(3a Q )^{\psi (2^{\lambda}) }  -2^{2\lambda} b Q^2. \notag
\end{align}

If $b=2b_1 (b_1 \in \Z),\, b_1 \equiv 1 \pmod{2},\, a\equiv 1 \pmod{4}$, and $\Delta' \not\equiv 3 \pmod{4}$,
then we have
\begin{align}
\varepsilon_2 & =\lambda -1, \notag \\
r & =-3b_1 [ 2^{\lambda}  (a2^{\lambda})^{\psi (Q)} + Q(aQ)^{\psi (2^{\lambda-1})}]
-b2^{2\lambda} Q^2. \label{eq:Albert-2-3}
\end{align}

If $b\equiv 0 \pmod{4}$ and $a\not\equiv 1 \pmod{4}$, 
then we have
\begin{align}
\varepsilon_2 & =1, \notag \\
r & =-b [ 3(2a)^{\psi (Q)} +Q^2] +3Q^2 a. \label{eq:Albert-2-4}
\end{align}

For all other cases, we have
\begin{align} 
\varepsilon_2 & =0, \notag \\
r & =-b [ 3(2 a )^{\psi (Q)} +Q^2]. \label{eq:Albert-2-5}
\end{align}

To use these results, we transform the polynomial $f_n(X)$ of (\ref{eq:fn}) into the form (\ref{eq:f(X)}) with no prime number $p$
such that $p^2 |a$ and $p^3 |b$. Assume that $f_n(X)$ is irreducible over $\Q$ (it is always irreducible whenever $v_2(n) \geq 0$;
otherwise, it is not necessarily irreducible. For example, $f_{-3/2} (X)=(X-1)(X+2)(X+1/2)$).
We have
\[
(3n_2)^3 f_n \left( \frac{X}{3n_2} +\frac{n}{3} \right)=X^3-3\Delta_n X -(2n_1+3n_2)\Delta_n.
\]
Let $m$ be the largest positive integer such that $m^2$ divides $3\Delta_n$ and $m^3$ divides $(2n_1+3n_2) \Delta_n$.
Put
\begin{equation}\label{eq:hn}
h_n(X)= \left( \frac{3n_2}{m} \right)^3 f_n \left( \frac{mX}{3n_2}+\frac{n}{3} \right)=X^3+aX+b,
\end{equation}
with integers $a:=-3\Delta_n/m^2$ and $b:= -(2n_1+3n_2) \Delta_n/m^3$. Then, we have $h_n (X) \in \Z[X]$,
and
there is no prime number $p$ such that $p^2 |a$ and $p^3 |b$. The polynomial $h_n (X)$ is irreducible over $\Q$ since $f_n (X)$ is
irreducible by the assumption. Furthermore,
\begin{equation}\label{eq:theta}
\theta := \frac{3n_2}{m} \left( \rho_n-\frac{n}{3} \right)
\end{equation}
is a root of $h_n (X)$ and $L_n=\Q (\rho_n)=\Q (\theta)$. We express $\Delta_n=n_1^2+3n_1n_2+9n_2^2=de^2c^3$ using $d,e,c\in \Z_{>0}$,
where $d$ and $e$ are square-free, and $(d,e)=1$.
Let $\zeta=\zeta_3$ be a primitive cube root of unity. We have $\Delta_n=A_n A_n'$, where $A_n:=n_1+3n_2(1+\zeta)$ and $A_n' := n_1+3n_2(1+\zeta^2)$.
Let $\calO_{L_n}$ be the ring of integers of $L_n$.
\begin{lem}\label{lem:n2rho}
Let $n=n_1/n_2$ be a rational number where $n_1$ and $n_2$ are $($not necessarily coprime$)$ integers.  We have $n_2 \rho_n \in \calO_{L_n}$.
\end{lem}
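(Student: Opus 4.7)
The plan is to produce an explicit monic polynomial in $\Z[X]$ having $n_2 \rho_n$ as a root; this immediately shows that $n_2 \rho_n$ is an algebraic integer, and since it visibly lies in $L_n=\Q(\rho_n)$, it must belong to $\calO_{L_n}$.

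First I would start from the defining relation $f_n(\rho_n)=0$, substitute $n=n_1/n_2$, and clear the denominator by multiplying through by $n_2$. This yields the integral relation
\[
n_2\rho_n^3 - n_1 \rho_n^2 - (n_1+3n_2)\rho_n - n_2 = 0,
\]
which is a polynomial identity with integer coefficients but is not monic in $\rho_n$.

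Next I would multiply this identity by $n_2^2$ and regroup the powers of $\rho_n$ so that each occurrence is paired with a factor of $n_2$: using $n_2^3\rho_n^3 = (n_2\rho_n)^3$, $n_1 n_2^2 \rho_n^2 = n_1 (n_2\rho_n)^2$, and $(n_1+3n_2)n_2^2\rho_n = (n_1 n_2+3n_2^2)(n_2\rho_n)$, the relation becomes
\[
(n_2\rho_n)^3 - n_1 (n_2\rho_n)^2 - (n_1 n_2 + 3n_2^2)(n_2\rho_n) - n_2^3 = 0.
\]
This is a monic polynomial in $n_2\rho_n$ with coefficients in $\Z$, so $n_2\rho_n$ is an algebraic integer in $L_n$, whence $n_2\rho_n\in\calO_{L_n}$. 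There is no real obstacle here; the argument is a short direct substitution, and in particular it makes no use of coprimality of $n_1$ and $n_2$, which is consistent with the statement of the lemma.
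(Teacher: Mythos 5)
Your proposal is correct and is essentially the paper's own argument: the monic relation you derive, $(n_2\rho_n)^3 - n_1(n_2\rho_n)^2 - (n_1n_2+3n_2^2)(n_2\rho_n) - n_2^3 = 0$, is exactly the paper's observation that $n_2\rho_n$ is a root of $n_2^3 f_n(X/n_2) = X^3 - n_1X^2 - (n_1n_2+3n_2^2)X - n_2^3 \in \Z[X]$. No differences worth noting.
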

\begin{proof}
The assertion follows from the fact that $n_2 \rho_n$ is a root of a monic polynomial :
\[
n_2^3 f_n (X/n_2) =
X^3-n_1X^2-(n_1n_2+3n_2^2)X-n_2^3 \quad (\in \Z [X]).
\]
\end{proof}
\begin{lem}\label{lem:1-zeta}
Let $\mathfrak p$ be a prime ideal of $\Z [\zeta]$ containing both $A_n$ and $A_n'$.
Then, we have $\mathfrak p =(1-\zeta)$.
\end{lem}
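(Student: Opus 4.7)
The plan is to use the elementary identities that relate $A_n$ and $A_n'$ modulo units and the arithmetic of $\mathbb{Z}[\zeta]$ at the prime $3$. First I would write down three identities:
\[
A_n+A_n' = 2n_1+3n_2,\qquad A_n-A_n' = 3n_2(\zeta-\zeta^2) = 3n_2\,\zeta(1-\zeta),
\]
and the factorization $(3) = (1-\zeta)^2$ in $\mathbb{Z}[\zeta]$ (which follows from $(1-\zeta)(1-\zeta^2)=3$ together with $1-\zeta^2 = -\zeta^2(1-\zeta)$). The point of the first two identities is that any prime $\mathfrak p$ containing both $A_n$ and $A_n'$ must contain their sum and difference.

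Next I would suppose, for contradiction, that $\mathfrak p\neq (1-\zeta)$. Since $(1-\zeta)$ is the unique prime of $\mathbb{Z}[\zeta]$ above $3$ (by the factorization above), this assumption forces $3\notin \mathfrak p$ and $1-\zeta\notin\mathfrak p$. Using that $\mathfrak p$ is prime and that $\zeta$ is a unit, the relation $3n_2\,\zeta(1-\zeta)\in\mathfrak p$ then forces $n_2\in\mathfrak p$. Feeding this back into $A_n = n_1+3n_2(1+\zeta)\in\mathfrak p$ yields $n_1\in\mathfrak p$ as well. Thus both $n_1$ and $n_2$ lie in $\mathfrak p\cap\mathbb{Z}$, which is a prime ideal of $\mathbb{Z}$; this contradicts $\gcd(n_1,n_2)=1$.

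Finally I would note that the converse implication (that $(1-\zeta)$ actually contains $A_n$ and $A_n'$ when the common-prime hypothesis can be realized) is not needed for the statement as written, so the argument above is complete. There is really no serious obstacle here: the whole lemma reduces to the fact that $(1-\zeta)$ is the only prime of $\mathbb{Z}[\zeta]$ through which a $\mathbb{Z}$-linear combination of $1$ and $3(1+\zeta)$ with coprime coefficients can be killed. The only mild care is in handling the unit $\zeta$ and ensuring the correct normalization $(3)=(1-\zeta)^2$ so that "$\mathfrak p\ne(1-\zeta)$" is genuinely equivalent to "$3\notin \mathfrak p$".
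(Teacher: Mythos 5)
Your proposal is correct and follows essentially the same route as the paper: both extract $A_n-A_n'=3n_2\zeta(1-\zeta)\in\mathfrak p$, use primality of $\mathfrak p$ (together with $(3)=(1-\zeta)^2$ and $\zeta$ being a unit) to reduce to the alternative $1-\zeta\in\mathfrak p$ or $n_2\in\mathfrak p$, and rule out the latter via $A_n=n_1+3n_2(1+\zeta)$ and $\gcd(n_1,n_2)=1$. The identity $A_n+A_n'=2n_1+3n_2$ you record is not actually needed, but its presence does no harm.
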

\begin{proof}
Assume that a prime ideal $\mathfrak p$ contains both $A_n$ and $A_n'$, then the prme ideal $\mathfrak p$ contains $A_n-A_n' =3n_2 \zeta (1-\zeta)$.
Since $\mathfrak p$ is a prime ideal, it follows that $1-\zeta \in \mathfrak p$ or $n_2 \in \mathfrak p$.
If $n_2 \in \mathfrak p$, then we have $n_1 =A_n-3n_2 (1+\zeta )\in \mathfrak p$, and this is a contradiction since $n_1, n_2 \in
\mathfrak p \cap \Z=p\Z$ for a prime number $p$. We conclude that $\mathfrak p =(1-\zeta)$.
\end{proof}
\begin{lem}\label{lem:equiv-0,1}
If $p$ is a prime number satisfying $p |\Delta_n$, then we have $p\equiv 0,1 \pmod{3}$.
\end{lem}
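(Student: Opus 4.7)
The plan is to exploit the factorization $\Delta_n = A_n \cdot A_n'$ in $\Z[\zeta]$ recorded just before the statement. The only number-theoretic input I need is the standard fact that a rational prime $p \neq 3$ is \emph{inert} in $\Z[\zeta] = \Z[\zeta_3]$ precisely when $p \equiv 2 \pmod 3$; this is the classical description of the splitting of primes in the Eisenstein integers.

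Arguing by contraposition, I would fix a prime $p$ with $p \equiv 2 \pmod 3$ (so in particular $p \neq 3$) and show that $p \nmid \Delta_n$. Since $p$ is inert, $p\Z[\zeta]$ is a prime ideal, so $p \mid A_n A_n'$ in $\Z[\zeta]$ would force $p \mid A_n$ or $p \mid A_n'$. Writing $A_n = (n_1 + 3n_2) + 3n_2\,\zeta$ in the $\Z$-basis $\{1, \zeta\}$ of $\Z[\zeta]$, the divisibility $p \mid A_n$ is equivalent to the two conditions $p \mid 3n_2$ and $p \mid n_1 + 3n_2$ holding simultaneously. Since $p \neq 3$, the first forces $p \mid n_2$, and then the second yields $p \mid n_1$, contradicting the coprimality $(n_1, n_2) = 1$. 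The case $p \mid A_n'$ is identical, using the $\Z$-basis $\{1, \zeta^2\}$ to expand $A_n' = (n_1 + 3n_2) + 3n_2\,\zeta^2$.

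Thus no prime $p \equiv 2 \pmod 3$ can divide $\Delta_n$, which is exactly the claim. There is no real obstacle here: once the factorization $\Delta_n = A_n A_n'$ is in hand, the lemma reduces to the splitting behaviour of $p$ in the Eisenstein integers together with a one-line coprimality argument. If one preferred to sidestep the language of inertness, the elementary identity $4\Delta_n = (2n_1 + 3n_2)^2 + 27\,n_2^2$ reduces the claim to asking when $-3$ is a square modulo $p$, which again forces $p \equiv 1 \pmod 3$; the two proofs are essentially the same computation viewed on opposite sides of the splitting-versus-squares dictionary.
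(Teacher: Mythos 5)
Your argument is correct and is essentially the proof the paper gives: both use that a prime $p\equiv 2\pmod 3$ is inert in $\Z[\zeta]$, deduce from $p\mid\Delta_n=A_nA_n'$ that $p$ divides $A_n=(n_1+3n_2)+3n_2\zeta$, and read off $p\mid 3n_2$ and $p\mid n_1+3n_2$ in the basis $\{1,\zeta\}$ to contradict $(n_1,n_2)=1$. The only cosmetic difference is that you argue by contraposition while the paper argues by contradiction, and your closing remark about $4\Delta_n=(2n_1+3n_2)^2+27n_2^2$ is an optional elementary variant not used in the paper.
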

\begin{proof}
Assume that $p$ is a prime number satisfying $p |\Delta_n$ and $p\equiv -1 \pmod{3}$. Since $p$ is 
inert in $\Q (\zeta)$, we have $p |A_n$ and $p |A_n'$. However, since $\{ 1, \zeta \}$ is a basis of
$\Z [\zeta]$ over $\Z$ and $p |A_n =n_1+3n_2(1+\zeta)$, we have $ p|(n_1+3n_2)$ and $p |3n_2$. This is a contradiction
since $p\ne 3$ and $(n_1,n_2)=1$.
\end{proof}
\begin{lem}\label{lem:prime-3}
The only prime number that divides both $\Delta_n $ and $2n_1+3n_2$ is $3$.
\end{lem}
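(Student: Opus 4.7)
My plan is to reduce the claim to a linear combination identity between $\Delta_n$ and $2n_1+3n_2$ that isolates $n_2$ (up to a power of $3$), and then exploit coprimality of $n_1$ and $n_2$.

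The key observation I would make is the algebraic identity
\begin{equation*}
4\Delta_n = (2n_1+3n_2)^2 + 27\, n_2^2,
\end{equation*}
which is a direct expansion of $\Delta_n = n_1^2+3n_1n_2+9n_2^2$. (Equivalently, since $\zeta+\zeta^2=-1$, one has $A_n+A_n' = 2n_1+3n_2$, and $(A_n+A_n')^2-4A_nA_n'=(A_n-A_n')^2 = -27n_2^2$, which is the same identity.)

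Now suppose $p$ is a prime with $p\mid \Delta_n$ and $p\mid 2n_1+3n_2$, and assume $p\neq 3$ in order to derive a contradiction. From the identity above, $p\mid 27 n_2^2$, and since $p\neq 3$ this forces $p\mid n_2$. Combining with $p\mid 2n_1+3n_2$ yields $p\mid 2n_1$.

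It remains to rule out $p=2$ and $p$ odd separately. If $p$ is odd (and $\ne 3$), then $p\mid n_1$, contradicting $(n_1,n_2)=1$. If $p=2$, then $2\mid n_2$ together with $(n_1,n_2)=1$ forces $n_1$ to be odd; but then $\Delta_n \equiv n_1^2 \equiv 1 \pmod 2$, contradicting $p\mid \Delta_n$. Hence $p=3$ is the only possibility, as claimed. I do not expect any serious obstacle: the whole argument is essentially one identity plus case analysis on the parity of $p$.
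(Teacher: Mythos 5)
Your proof is correct and follows essentially the same route as the paper: both rest on the identity $4\Delta_n=(2n_1+3n_2)^2+27n_2^2$ (the paper phrases it as the congruence $4\Delta_n\equiv 27n_2^2\pmod{p}$ obtained by substituting $2n_1\equiv -3n_2$), deduce that $p\mid n_2$ would force $p\mid 2n_1$, and conclude by coprimality of $n_1$ and $n_2$. The only cosmetic difference is in excluding $p=2$: the paper cites Lemma~\ref{lem:equiv-0,1} (every prime dividing $\Delta_n$ is $\equiv 0,1\pmod 3$), whereas you give a self-contained parity computation showing $\Delta_n$ would be odd --- both are valid.
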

\begin{proof}
Assume that $p$ is a prime number satisfying $p|\Delta_n$ and $p|(2n_1+3n_2)$.
From $p |(2n_1+3n_2)$, we have $2n_1 \equiv -3n_2 \pmod{p}$, and hence $4\Delta_n \equiv 27 n_2^2 \pmod{p}$.
From $p|\Delta_n$, we have $p |27n_2^2$. Assume $p|n_2$. From $2n_1 \equiv -3n_2 \pmod{p}$,  we have $p |n_1$.
Since $p\ne 2$ from Lemma~\ref{lem:equiv-0,1}, we obtain $p|n_1$. However, this is a contradiction since $(n_1,n_2)=1$.
We conclude  $p\nmid n_2$ and hence $p=3$.
\end{proof}

The integer $m$ in (\ref{eq:hn}) is given by the following lemma:
\begin{lem}\label{lem:m}
Let $d,e,c$, and $m$ be the inetegers as defined above, We have the following:
\begin{enumerate}
\item[(1)] If $3\nmid n_1$, then we have $3\nmid d,\, 3 \nmid e,\, 3\nmid c $ and $m=c$.
\item[(2)] If $n_1=3t \, (t\in \Z)$ and $n_2 \equiv -2t \pmod{9}$, then we have $ 3\nmid d,\, 3\nmid e,\, 3 ||c$ and $m=3c$.
\item[(3)] If $n_1=3t\, (t\in \Z)$ and $n_2 \not\equiv -2t \pmod{9}$, then we have the following.
\begin{enumerate}
\item[(i)]  If $t\equiv n_2 \pmod{3}$, then $3\nmid d,\, 3\nmid e,\, 3||c$ and $m=c$.
\item[(ii)]  If $t\not\equiv n_2 \pmod{3}$, then $3 \nmid d,\, 3 ||e,\, 3\nmid c $ and $m=3c$.
\end{enumerate}
\end{enumerate}
\end{lem}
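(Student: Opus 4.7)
The plan is to determine $v_p(m)$ prime-by-prime from the defining formula
$v_p(m) = \min\!\bigl( \lfloor v_p(3\Delta_n)/2 \rfloor,\, \lfloor v_p((2n_1+3n_2)\Delta_n)/3 \rfloor \bigr)$,
and then to compare it with the valuations of $c,d,e$ read off from $\Delta_n = de^2 c^3$. Since the prime $3$ plays a special role (both through the extra factor in $3\Delta_n$ and through the congruences distinguishing the cases), I would first handle all $p\neq 3$ uniformly and then carry out a case analysis at $p = 3$.

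For a prime $p\neq 3$ dividing $\Delta_n$, Lemma~\ref{lem:prime-3} gives $p\nmid (2n_1+3n_2)$, so $v_p(3\Delta_n)$ and $v_p((2n_1+3n_2)\Delta_n)$ both equal $v_p(\Delta_n)$. Because $d$ and $e$ are squarefree and coprime, the residue of $v_p(\Delta_n)$ modulo $3$ determines whether $p$ divides $d$, $e$, or only $c$, and in particular $v_p(c)=\lfloor v_p(\Delta_n)/3\rfloor$. A direct comparison of $\lfloor v_p(\Delta_n)/2\rfloor$ with $\lfloor v_p(\Delta_n)/3\rfloor$ shows $v_p(m) = v_p(c)$ in every case (immediate once $v_p(\Delta_n)\geq 3$ and easy to verify for $v_p(\Delta_n)\in\{1,2\}$). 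Hence the "prime-to-$3$" parts of $m$ and $c$ coincide, and the lemma reduces entirely to the analysis at $p=3$.

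At $p=3$, I compute $v_3(\Delta_n) = v_3(n_1^2+3n_1 n_2+9n_2^2)$ and $v_3(2n_1+3n_2)$ in each of the four sub-cases, using $(n_1,n_2)=1$ throughout. Case (1) is immediate: $3\nmid n_1$ forces $\Delta_n\equiv n_1^2 \not\equiv 0\pmod 3$ and $2n_1+3n_2\not\equiv 0\pmod 3$, so $v_3(m)=0=v_3(c)$. In case (2), writing $n_1=3t$ (with $3\nmid t$, forced by $(3t,n_2)=1$ and $n_2\equiv -2t\pmod 9$), the congruence gives $t^2+tn_2+n_2^2\equiv 3t^2\pmod 9$ and $2t+n_2\equiv 0\pmod 9$, so $v_3(\Delta_n)=3$ and $v_3(2n_1+3n_2)\geq 3$; the $\min$-formula then yields $v_3(m)=2$ while $v_3(c)=1$, matching $m=3c$. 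In case (3)(i), $t\equiv n_2\pmod 3$ still gives $v_3(\Delta_n)=3$, but the hypothesis $n_2\not\equiv -2t\pmod 9$ pins down $v_3(2t+n_2)=1$ and hence $v_3(2n_1+3n_2)=2$, producing $v_3(m)=1=v_3(c)$. In case (3)(ii), $t\not\equiv n_2\pmod 3$ combined with $3\nmid n_2$ yields $t^2+tn_2+n_2^2\equiv 1\pmod 3$, so $v_3(\Delta_n)=2$ and $v_3(2n_1+3n_2)=1$; the $\min$-formula gives $v_3(m)=1$ while $v_3(c)=0$, again matching $m=3c$. In each case the claimed divisibility pattern of $3$ in $d,e,c$ is read off from $v_3(\Delta_n)\bmod 3$.

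The main obstacle is the careful mod-$3$ and mod-$9$ bookkeeping at $p=3$: the four sub-cases are governed by the residues of $t^2+tn_2+n_2^2$ and $2t+n_2$, and one must verify that these residues are correctly controlled by the stated hypotheses on $(n_1,n_2)$. Once those two valuations are in hand, the $\min$-formula mechanically delivers $v_3(m)$, and the conclusion follows.
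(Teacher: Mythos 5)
Your proposal is correct and follows essentially the same route as the paper: handle all primes $p\neq 3$ uniformly via Lemma~\ref{lem:prime-3} (so that $v_p(m)=v_p(c)$ there), and then determine $v_3(m)$ by the explicit mod $3$ and mod $9$ computations of $v_3(\Delta_n)$ and $v_3(2n_1+3n_2)$ in each sub-case. The only cosmetic difference is that you package the definition of $m$ as the closed-form valuation $v_p(m)=\min\bigl(\lfloor v_p(3\Delta_n)/2\rfloor,\lfloor v_p((2n_1+3n_2)\Delta_n)/3\rfloor\bigr)$, whereas the paper derives the same equalities by a divisibility argument ($m^3\mid\Delta_n$ away from $3$, hence $v_p(m)\le v_p(c)$, then maximality forces equality).
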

\begin{proof}
The assertions on $d,\, e$, and $c$ follow from $\Delta_n=n_1^2+3n_1n_2 +9n_2^2=de^2c^3$ and $(n_1.n_2)=1$.
We show the assertions on $m$. Next, we prove the assertions on $m$.

(1)\ Suppose that $3\nmid n_1$ holds. Since $m^2 |3\Delta_n$ and $3\nmid \Delta_n$, we have $3\nmid m$.
Therefore, we have $(m, 2n_1+3n_2)=1$ from Lemma~\ref{lem:prime-3}. 
From $m^3 | (2n_1+3n_2)\Delta_n$, we obtain $m^3|\Delta_n =de^2c^3$.
Let $p$ be a prime number satisfying $p|m$. Since $d$ and $e$ are square-free and $(d,e)=1$, we have
\[
3v_p (m)=v_p (m^3) \leq v_p(de^2c^3)=\begin{cases}
1+3v_p (c) & \text{if $p|d$}, \\
2+3v_p (c) & \text{if $p|e$}.
\end{cases}
\]
In both cases $p|d $ and $p|e$, we obtain $v_p (m)\leq v_p (c)$.
Since this inequality holds for any prime number $p$ dividing $m$, we obtain $ m|c$. Let $c'$ be the
integer satisfying $c=mc'$.
Since $3\Delta_n /m^2=3de^2{c'}^3m$ and $(2n_1+3n_2)\Delta_n/m^3=(2n_1+3n_2)de^2{c'}^3$, we have $c'=1$ from the choice of $m$. We conclude 
$m=c$.

(2)\ Suppose that $n_1=3t \,  (t\in \Z)$ and $n_2 \equiv -2t \pmod{9}$ hold.
We have $3^3 || \Delta_n$ and $2t+n_2 \equiv 0 \pmod{9}$. From $m^2 |3\Delta_n$ and $m^3 | (2n_1+3n_2)\Delta_n =3(2t+n_2)\Delta_n$, we obtain 
$3^2 ||m$. Since $m^2 |3\Delta_n$, $3^2||m$, and $2n_1+3n_2=3(2t+n_2) \equiv 0 \pmod{3^3}$, it follows from Lemma~\ref{lem:prime-3} that
$m/9$ and $(2n_1+3n_2)/27$ are coprime. On the other hand, we have $(m/9)^3 |(2n_1+3n_2) \Delta_n/3^6=(2n_1+3n_2)/3^3\times
\Delta_n/3^3$.
We conclude that $(m/9)^3| \Delta_n /3^3=de^2c^3/3^3$.
Let $p$ be a prime number satisfying $p|m$ and $p\ne 3$. Since $d$ and $e$ are square-free and $(d,e)=1$, we have
\[
3v_p (m)=v_p ((m/9)^3) \leq v_p (de^2c^3/3^3)=v_p (de^2c^3)=\begin{cases}
1+3v_p(c) & \text{if $p|d$}, \\
2+3v_p (c) & \text{if $p|e$}.
\end{cases}
\]
In both cases $p|d$ and $p|e$, we obtain $v_p (m )\leq v_p (c)$. Since this inequality holds for any prime number $p \, (\ne 3)$
dividing $m$, we obtain $m/9 |c$.
From $m/9 |c,\, 3\nmid m/9$ and $3||c$, we can write $c=mc'/3$ with $c'\in \Z$. Since $3\Delta_n /m^2 =de^2{c'}^3 m/9$ and
$(2n_1+3n_2)\Delta_n/m^3=(2n_1+3n_2)/27 \times de^s{c'}^3$, we have $c'=1$ from the choice of $m$. We conclude that $m=3c$.

(3)\ Suppose that $n_1=3t\, (t\in \Z)$ and $n_2 \not\equiv -2t \pmod{9}$ hold. First, we prove $3 ||m$.
If $t\equiv n_2 \pmod{3}$, then we have $3^3 ||\Delta_n$ and $2t+n_2 \not\equiv 0\pmod{9}$.
From $m^2 |3\Delta_n$ and $m^3 | (2n_1+3n_2)\Delta_n =3(2t+n_2)\Delta_n$, we obtain $3||m$. If $t\not\equiv n_2 \pmod{3}$, then we have
$3^2 || \Delta_n$. From $m^2 |3\Delta_n$ and $m^3 | (2n_1+3n_2)\Delta_n =3(2t+n_2)\Delta_n$, we obtain $3||m$.
Next, we prove that $m/3$ divides $c$.
Since $m^2 |3\Delta_n$ and $3||m$, it follows from Lemma~\ref{lem:prime-3} that $m/3$ and $(2n_1+3n_2)/3 $ are coprime .
On the other hand, we have $(m/3)^3 | (2n_1+3n_2)\Delta_n /3^3=(2n_1+3n_2)/3 \times \Delta_n/3^2$. We conclude
$(m/3)^3 |\Delta_n/3^2 =de^2c^3/3^2$. Let $p$ be a prime number satisfying $p|m$ and $p\ne 3$.
Since $d$ and $e$ are square-free and $(d,e)=1$, we have
\[
3v_p(m)=v_p ((m/3)^3)\leq v_p (de^2c^3/3^2)=v_p(de^2c^3)=\begin{cases}
1+3v_p(c) & \text{if $p|d$},\\
2+3v_p (c) & \text{if $p|e  $}.
\end{cases}
\]
In both cases $p|d$ and $p|e$, we obtain $v_p (m)\leq v_p(c)$.
Since this inequality holds for any prime number $p\, (\ne 3)$ dividing $m$, we obtain $m/3 |c$. Finally, we show $m=c$ if $t\equiv n_2
\pmod{3}$ and $m=3c$ if $t\not\equiv n_2 \pmod{3}$. If $t\equiv n_2 \pmod{3}$, then we can write $c=mc'$ with $c' \in \Z$ from
$m/3 |c,\, 3\nmid m/3$ and $3 ||c$. Since $3\Delta_n /m^2=3de^2 {c'}^3m$ and $(2n_1+3n_2)\Delta_n /m^3=(2n_1+3n_2)de^2{c'}^3 $,
we have $c'=1$ from the choice of $m$. We conclude  that $m=c$. If $t\not\equiv n_2 \pmod{3}$, then we can write $c=mc'/3$ with $c' \in \Z$ from
$m/3 |c$. Since $3\Delta_n /m^2=d(e/3)^2 {c'}^3m$ 
and $(2n_1+3n_2)\Delta_n /m^3=(2n_1+3n_2)/3 \times d(e/3)^2 {c'}^3$, we have $c'=1$ from the choice of $m$. We conclude that $m=3c$.
\end{proof}

Using the results of Albert, we give an integral basis of $L_n =\Q (\rho_n)$.
\begin{lem}\label{lem:lambda-mu}
Put $a=-3\Delta_n/m^2$ and $b=-(2n_1+3n_2)\Delta_n/m^3$, where the integer $m$ is given by Lemma~\ref{lem:m}. 
Then, the integers $\lambda,\, \mu,\, P,\, Q$, and $\Delta'$ in $(\ref{eq:Delta})$ are given as follows:
\begin{align*}
\lambda &=v_2(n_2), \\
\mu & =\begin{cases}
v_3 (n_2)+3 & \text{if $3\nmid n_1$}, \\
0  & \text{if $n_1=3t\, (t\in \Z)$ and $n_2 \equiv -2t \pmod{9}$},\\
3 & \text{if $n_1=3t\, (t\in \Z),\, n_2 \not\equiv -2t \pmod{9}$ and $t\equiv n_2 \pmod{3}$},\\
2 & \text{if $n_1=3t \, (t\in \Z) $ and $t\not\equiv n_2 \pmod{3}$}, 
\end{cases}, \\
P & =\begin{cases}
\frac{e}{3} & \text{if $n_1=3t\, (t\in \Z) $ and $t\not\equiv n_2 \pmod{3}$},\\
e & \text{otherwise}, 
\end{cases} \\
Q & =\frac{ |n_2|}{2^{v_2(n_2)} 3^{v_3(n_2)}}, \\
\Delta' & =\begin{cases}
-d^2 \left(\frac{e}{3} \right)^2 & \text{if $n_1=3t\, (t\in \Z)$ and $t\not\equiv n_2 \pmod{3}$},\\
-d^2e^2 & \text{otherwise}.
\end{cases}
\end{align*}
\end{lem}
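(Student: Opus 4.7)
The plan is to reduce everything to a single identity for $\Delta$ and then run through the four sub-cases of Lemma~\ref{lem:m}. A direct expansion using $(2n_1+3n_2)^2-4\Delta_n=-27n_2^2$ gives
\[
\Delta = 4a^3+27b^2 = -3^6\, n_2^2\, \Delta_n^2/m^6,
\]
and substituting $\Delta_n=de^2c^3$ together with the value of $m$ from Lemma~\ref{lem:m} yields $\Delta=-3^6 n_2^2 d^2 e^4$ when $m=c$ (cases (1) and (3)(i)) and $\Delta=-n_2^2 d^2 e^4$ when $m=3c$ (cases (2) and (3)(ii)). Checking the three parity patterns of $(n_1,n_2)$ compatible with $(n_1,n_2)=1$ shows that $\Delta_n$ is always odd, hence $d,e,c$ are odd. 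Because $P$ and $Q$ are coprime to $6$, the defining factorization forces $v_2(\Delta')\le 1$ and $v_3(\Delta')\le 1$, giving $\lambda=\lfloor v_2(\Delta)/2\rfloor=v_2(n_2)$ and $\mu=\lfloor v_3(\Delta)/2\rfloor$ in every sub-case. Combined with $3\nmid n_2$ whenever $3\mid n_1$ (which follows from $(n_1,n_2)=1$) and with the $3$-adic information supplied by Lemma~\ref{lem:m}, the claimed values of $\mu$ drop out.

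For $P$, fix a prime $p$ coprime to $6$ dividing $a$. Such a prime divides $dec$, hence $\Delta_n$, and Lemma~\ref{lem:prime-3} then forces $p\nmid (2n_1+3n_2)$. Thus $v_p(a)=v_p(d)+2v_p(e)+v_p(c)$ and $v_p(b)=v_p(d)+2v_p(e)$. Running through the three squarefree-coprime possibilities for $(v_p(d),v_p(e))\in\{(0,0),(0,1),(1,0)\}$ shows that $v_p(P)=\min(v_p(a),\lfloor v_p(b)/2\rfloor)=v_p(e)$, so together with $(P,6)=1$ one obtains $P=e$ in cases (1), (2), (3)(i) and $P=e/3$ in case (3)(ii). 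For $Q$, a prime $p$ coprime to $6a$ is coprime to $dec$ and to $m$, whence $v_p(\Delta)=2v_p(n_2)$ and $v_p(Q)=v_p(n_2)$. Conversely, any prime $p\neq 2,3$ dividing $n_2$ is coprime to $\Delta_n$ (otherwise it would divide $n_1$, contradicting $(n_1,n_2)=1$), and hence to $a$; the product therefore collapses to $Q=|n_2|/(2^{v_2(n_2)}3^{v_3(n_2)})$ in every case. Finally $\Delta'$ is recovered by dividing $\Delta$ by $2^{2\lambda}3^{2\mu}P^2Q^2$.

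The main obstacle is the bookkeeping in case (3)(ii): there a factor of $3$ lives inside $e$ rather than $c$, which simultaneously raises $m$ to $3c$ and drops $v_3(\Delta)$ from $6$ (as in cases (1) and (3)(i)) to $4$, pushing $\mu$ from $3$ down to $2$; at the same time $P$ loses a factor of $3$ and becomes $e/3$. One has to check that these three shifts combine consistently to yield $\Delta'=-d^2(e/3)^2$, rather than the $-d^2 e^2$ appearing in the other three sub-cases. Everything else is routine manipulation of $p$-adic valuations using Lemma~\ref{lem:m}.
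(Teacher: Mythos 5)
Your proposal is correct and follows essentially the same route as the paper: compute $\Delta=4a^3+27b^2$ explicitly from Lemma~\ref{lem:m} (your unified identity $\Delta=-3^6n_2^2\Delta_n^2/m^6$ reduces to the paper's case-by-case values $-3^6d^2e^4n_2^2$ for $m=c$ and $-d^2e^4n_2^2$ for $m=3c$), and then read off $P$, $Q$, $\Delta'$, $\lambda$, $\mu$ from Albert's definitions using Lemmas~\ref{lem:equiv-0,1} and \ref{lem:prime-3}. Your explicit valuation bookkeeping for $P$ and $Q$ simply fills in details that the paper leaves implicit.
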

\begin{proof}
From Lemma~\ref{lem:m}, we have
\begin{align}
a & =-3\Delta_n/m^2  \label{eq:a,b} \\
& =\begin{cases}
-3de^2c & \text{if $3\nmid n_1$ or $n_1=3t \,  (t\in \Z),\, n_2 \not\equiv -2t  \!\! \pmod{9}$ and $t\equiv n_2  \!\! \pmod{3}$}, \\
-de^2c/3 & \text{otherwise},
\end{cases} \notag \\
b & = -(2n_1+3n_2)\Delta_n/m^3 \notag \\
& = \begin{cases}
-de^2 (2n_1+3n_2) & \text{if $3\nmid n_1$ or $n_1=3t \,  (t\in \Z),\, n_2 \not\equiv -2t \! \! \pmod{9}$ and $t\equiv n_2  \!\! \pmod{3}$}, \\
-de^2 (2n_1+3n_2)/3^3 & \text{otherwise},
\end{cases} \notag
\end{align}
and
\begin{align}
\Delta & =4a^3+27b^2  \label{eq:Delta=} \\
& = \begin{cases}
-3^6d^2e^4n_2^2 & \text{if $3\nmid n_1$ or $n_1=3t \, (t\in \Z),\, n_2 \not\equiv -2t  \!\! \pmod{9}$ and $t\equiv n_2 \!\! \pmod{3}$}, \\
-d^2e^4n_2^2 & \text{otherwise.}
\end{cases} \notag
\end{align}
According to Lemma~\ref{lem:m}, we have $3||e$ if $n_1=3t\, (t\in \Z)$ and $t\not\equiv n_2 \!\! \pmod{3}$, and $3\nmid e $
otherwise. Since $P$ is the largest integer divisor of $a$ such that $(P,6)=1$ and $P^2 |b$, using Lemma~\ref{lem:prime-3},
we obtain the assertion on $P$. Next, we consider the assertion on $Q$.
It follows that $n_2$ is prime to $\Delta_n =n_1^2+3n_1n_2+9n_2^2=ed^2c^3$ since $(n_1,n_2)=1$.
From $Q=\prod_{(p,6a)=1} p^{\lfloor v_p (\Delta)/2 \rfloor}$, we obtain $Q= |n_2| /(2^{v_2(n_2)} 3^{v_3(n_2)})$.
Since $\Delta'$ is the largest integer divisor of $\Delta /(PQ)^2$ satisfying $4\nmid \Delta'$ and $9\nmid \Delta'$, we
obtain the assertion on $\Delta'$. The assertions for the remaining $\lambda$ and $\mu$ are obtained from $\Delta=
2^{2\lambda} 3^{2\mu } P^2 Q^2 \Delta'$.
\end{proof}
\begin{prop}\label{prop:IB}
Let $n=n_1/n_2$ be a rational number where the integers $n_1$ and $n_2$ are coprime. Suppose that the cubic
polynomial $f_n(X)$ is irreducible over $\Q$. Let $\rho=\rho_n$ be a root of $f_n(X)$ and
$a=-3\Delta_n/m^2,\, b=-(2n_1+3n_2)\Delta_n/m^3$, where the integer $m$ is given by Lemma~\ref{lem:m}. Put
\begin{align*}
\phi & =\begin{cases}
\frac{1}{3m} (3n_2\rho -n_1-mr) & \text{if $3\nmid n_1$}, \\
& \\
\frac{1}{m} (3n_2\rho-n_1) & \text{if $3|n_1$}, 
\end{cases} \\
\psi & =\frac{1}{u} (9n_2^2 \rho^2+3n_2(mr-2n_1)\rho +m^2r^2+m^2a-mn_1r+n_1^2),
\end{align*}
\end{prop}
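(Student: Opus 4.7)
The plan is to pull back Albert's explicit integral basis for $\Q(\theta)=L_n$, where $\theta$ is a root of the depressed cubic $h_n(X)=X^3+aX+b$ from (\ref{eq:hn}), to an integral basis expressed in terms of $\rho_n$ via the affine relation $\theta=(3n_2\rho_n-n_1)/m$ that follows from (\ref{eq:theta}). Since this substitution has rational coefficients with explicit denominators, multiplying Albert's basis elements through by appropriate powers of $m$ turns the transport into a bookkeeping exercise: each element $P(\theta)$ in Albert's basis becomes a $\Q$-linear combination of $1,\rho_n,\rho_n^2$ of the form claimed in the statement.

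First I would determine which of the two regimes of Albert's theorem applies in each case of Lemma~\ref{lem:m}. I would verify that the hypothesis ``$a\equiv 6 \pmod{9}$, $b\not\equiv 0 \pmod{3}$, and $\mu>2$'' of (\ref{eq:Albert-IB1}) is equivalent to $3\nmid n_1$. Indeed, if $3\nmid n_1$, then $\Delta_n\equiv n_1^2\equiv 1\pmod{3}$, and combined with $c^3\equiv c\pmod{3}$ (since $3\nmid c$ by Lemma~\ref{lem:m}) this yields $de^2c\equiv 1\pmod{3}$, whence $a=-3de^2c\equiv 6\pmod{9}$; also $b=-de^2(2n_1+3n_2)\not\equiv 0\pmod{3}$ since $3\nmid 2n_1$; and Lemma~\ref{lem:lambda-mu} gives $\mu=v_3(n_2)+3>2$. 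Conversely, if $3\mid n_1$, then $b$ picks up a factor of $3$ from $(2n_1+3n_2)$ and the hypothesis fails, so (\ref{eq:Albert-IB2}) applies instead.

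Next I would transport Albert's basis under $\theta=(3n_2\rho_n-n_1)/m$. In case (1) of Lemma~\ref{lem:m}, the middle element $(-r+\theta)/3$ of (\ref{eq:Albert-IB1}) becomes $(3n_2\rho_n-n_1-mr)/(3m)$, which matches $\phi$. In cases (2) and (3), the middle element $\theta$ of (\ref{eq:Albert-IB2}) becomes $(3n_2\rho_n-n_1)/m$, again matching $\phi$. For the third element I would expand
\[
m^2(r^2+a+r\theta+\theta^2) = 9n_2^2\rho_n^2+3n_2(mr-2n_1)\rho_n+m^2r^2+m^2 a - mn_1 r+n_1^2,
\]
so dividing Albert's third element by its denominator $v$ amounts to dividing the right-hand side by $u:=m^2 v$; this produces $\psi$ with $u=m^2\cdot 2^{\varepsilon_2}3^{\mu-1}PQ$ in case (1), and $u=m^2\cdot 2^{\varepsilon_2}3^{\varepsilon_1}PQ$ in the remaining cases, where $\varepsilon_1,\varepsilon_2,P,Q$ are the integers supplied by Lemma~\ref{lem:lambda-mu}.

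The main obstacle will be the case analysis rather than any individual computation. Albert's theorem splits each regime into four sub-cases according to the parity of $b/2$, the class of $a\pmod 4$, and (where it appears) the class of $\Delta'\pmod 4$; each sub-case carries its own formula for $r$ and its own value of $\varepsilon_2$. To state the proposition cleanly one must match all eight sub-cases against the conditions on $(n_1,n_2)$ coming from Lemmas~\ref{lem:m} and \ref{lem:lambda-mu}, so that $r$ and $u$ become explicit integers depending only on $n_1$ and $n_2$. The substitution itself is routine once the correct values are in hand; the real work is the careful matching of Albert's sub-cases.
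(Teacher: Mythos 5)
Your overall strategy is exactly the paper's: apply Albert's theorem to $h_n(X)=X^3+aX+b$ and transport the resulting basis through $\theta=(3n_2\rho_n-n_1)/m$, with the algebraic identity $m^2(r^2+a+r\theta+\theta^2)=9n_2^2\rho_n^2+3n_2(mr-2n_1)\rho_n+m^2r^2+m^2a-mn_1r+n_1^2$ doing the bookkeeping. Your verification of the forward direction (that $3\nmid n_1$ puts you in Albert's first regime) is correct, and your formula $u=m^2\cdot 2^{\varepsilon_2}3^{\ast}PQ$ does collapse to the stated values of $u$ once the correct exponents are known. However, there are two genuine gaps. First, your justification of the converse --- ``if $3\mid n_1$ then $b$ picks up a factor of $3$ from $(2n_1+3n_2)$ and the hypothesis fails'' --- is false in two of the three sub-cases: when $n_2\equiv -2t\pmod 9$ one has $m=3c$ with $v_3(m^3)=6$ and $v_3((2n_1+3n_2)\Delta_n)$ can equal $6$, so $3\nmid b$ is possible, and when $t\not\equiv n_2\pmod 3$ one computes $b=-d(e/3)^2(2t+n_2)\not\equiv 0\pmod 3$. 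The hypothesis of Albert's first regime actually fails for a different reason in each sub-case ($\mu=0$, $a\equiv 0\pmod 9$, and $\mu=2$ respectively), and since whether $b\equiv 0\pmod 9$ or $b\not\equiv 0\pmod 3$ feeds directly into Albert's formula for $\varepsilon_1$, this error would propagate.

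Second, and more seriously, you assert that $\varepsilon_1$ is ``supplied by Lemma~\ref{lem:lambda-mu}''; it is not. That lemma gives $\lambda,\mu,P,Q,\Delta'$, whereas $\varepsilon_1$ is governed by Albert's condition ``$b\equiv 0\pmod 9,\ a\equiv 0\pmod 3$ or $b\not\equiv 0\pmod 3,\ b^2+a-1\equiv 0\pmod 9$'', and $\varepsilon_1$ decides whether $u=3ec^2n_2$ or $9ec^2n_2$ in the $3\mid n_1$ cases --- i.e.\ it is exactly the content of the dichotomy in part (2) of the proposition. Ruling out $b^2+a-1\equiv 0\pmod 9$ when $t\not\equiv n_2\pmod 3$ requires the computation $b^2+a-1\equiv(b+2)(b-2)\pmod 9$ together with $b\pm 2\equiv \pm n_2^3\pm 2$ or $\pm t^3\pm 2\pmod 9$, which is the bulk of the work in the $3\mid n_1$ case and is absent from your plan. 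You correctly flag the matching of Albert's $2$-adic sub-cases as the remaining work, but the $3$-adic condition determining $\varepsilon_1$ is the step your outline would fail to produce.
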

where $r$ and $u$ are integers given as follows. Let $\lambda,\, \mu$, and $Q$ be the integers given by Lemma~\ref{lem:lambda-mu}.
\begin{enumerate}
\item[(1)] If $3\nmid n_1$, then we have
\begin{align*}
r & =\begin{cases}
-b [(2\cdot 3^{\mu-1} \cdot a)^{\psi (Q)} 3^{\mu} +Q (2\cdot a/3 \cdot Q)^{\psi (3^{\mu-1})}] & \text{if $2\nmid n_2$}, \\
& \\
-b [(2\cdot 3^{\mu-1} \cdot a)^{\psi (Q)} 3^{\mu} +Q (2\cdot a/3\cdot Q)^{\psi (3^{\mu-1})} ] +3^{\mu-1}  aQ & \text{if $2||n_2$}, \\
& \\
-b[ (2^{\lambda+1} \cdot 3^{\mu-1}\cdot a)^{\psi (Q)} 3^{\mu} +Q(2^{\lambda+1} \cdot a/3 \cdot Q)^{\psi (3^{\mu -1})}] 2^{\lambda} \\
\quad +(2^{\lambda-1}-3\cdot b/2)(3^{\mu-1} aQ)^{\psi (2^{\lambda})} Q\cdot 3^{\mu -1}  & \text{ if $4|n_2$}, 
\end{cases} \\
u & =9ec^2n_2.
\end{align*}
\item[(2)] If $3|n_1$, then we have
\begin{align*}
r & = \begin{cases}
-b[3(2a)^{\psi (Q)}+Q^2]  & \text{if $2\nmid n_2$},\\
& \\
-b [3(2a)^{\psi (Q)}+Q^2]+3Q^2a  & \text{if $2||n_2$},\\
& \\
-3 \cdot 2^{\lambda}\cdot b (2^{\lambda+1} a)^{\psi (Q)} +3Q (2^{\lambda-1} -3\cdot b/2 )(3aQ)^{\psi (2^{\lambda})}-2^{2\lambda}bQ^2 & \text{if $4|n_2$},
\end{cases}\\
u & = \begin{cases}
9ec^2n_2 & \text{if $n_1=3t\, (t\in \Z)$ and $n_2 \equiv -2t \pmod{9}$},\\
& \\
3ec^2n_2& \text{otherwise}.
\end{cases}
\end{align*}
Then, $\{1,\phi, \psi \}$ is an integral basis of $L_n$.
\end{enumerate}
\begin{proof}
(1)\ Assume $3\nmid n_1$. According to Lemmas \ref{lem:equiv-0,1}, \ref{lem:m} (1), and \ref{lem:lambda-mu}, and
(\ref{eq:a,b}), we have
\begin{align*}
a & =-3de^2c \equiv 6 \pmod{9}, \\
b & =-de^2 (2n_1+3n_2) \not\equiv 0 \pmod{3},\\
\mu &>2.
\end{align*}
Therefore, from the results of Albert, an integral basis of $L_n$ is given by (\ref{eq:Albert-IB1}).
If $2\nmid n_2$, then since $2\nmid \Delta_n$ from Lemma~\ref{lem:equiv-0,1}, we have $b\not\equiv 0 \pmod{2}$. Therefore,
the integers $\varepsilon_2$ and $r$ in (\ref{eq:Albert-IB1}) are given by (\ref{eq:Albert-1-4}). 
Next, assume $2||n_2$. Since $2\nmid c $ from Lemma~\ref{lem:equiv-0,1}, and hence $c^2 \equiv 1 \pmod{4}$,
we have
$a=-3de^2c\equiv de^2c^3 \equiv \Delta_n \equiv 1+3n_1n_2 \not\equiv 1 \pmod{4}$ and $b\equiv 0\pmod{4}$.
Therefore, the integers $\varepsilon_2$ and $r$ in (\ref{eq:Albert-IB1}) are given by (\ref{eq:Albert-1-3}).
Finally, assume $4|n_2$. Since $2\nmid c$ from Lemma~\ref{lem:equiv-0,1}, and hence $c^2\equiv 1\pmod{4}$,
we have $a=-3de^2c\equiv de^2c^3 \equiv \Delta_n \equiv n_1^2 \equiv 1 \pmod{4}$ and $b=-de^2 (2n_1+3n_2)
\equiv 2 \pmod{4}$. Furthermore, according to Lemma~\ref{lem:lambda-mu}, we have
$\Delta' =-de^2 \equiv 3\pmod{4}$.
Therefore, the integers $\varepsilon_2$ and $r$ in (\ref{eq:Albert-IB1}) are given by (\ref{eq:Albert-1-1}).
From Lemma~\ref{lem:lambda-mu} and (\ref{eq:theta}), we get the assertion in the case of $3\nmid n_1$ (we obtain an integral basis
$\{1,\phi,-\psi \}$ in the case of  $n_2 <0$ from the results of Albert; however $\{1, \phi, \psi \}$ is also an integral basis).

(2)\ Assume $3|n_1$ and put $n_1=3t\, (t\in \Z)$. We show that $a\equiv 6 \pmod{9},\, b\not\equiv 0 \pmod{3}$ and 
$\mu >2$ are not simultaneously satisfied in this case. If $n_2 \not\equiv -2t \pmod{9}$ and $t\equiv n_2 \pmod{3}$, then
we have $a=-3de^2c\equiv 0 \pmod{9}$ from Lemma~\ref{lem:m} (3) (i) and (\ref{eq:a,b}).
If $t\not\equiv n_2 \pmod{3}$, then we have $b=-de^2 (2n_1+3n_2)/3^3 =-d(e/3)^2 (2t+n_2) \not\equiv 0\pmod{3}$ from Lemmas~\ref{lem:equiv-0,1},
\ref{lem:m} (3) (ii), and (\ref{eq:a,b}). If $n_2 \equiv -2t \pmod{9}$, then we have $\mu=0$ from Lemma~\ref{lem:lambda-mu}.
Therefore, from the results of Albert, an integral basis of $L_n$ is given by (\ref{eq:Albert-IB2}). Next, we determine $\varepsilon_1$ in 
(\ref{eq:Albert-2-1}).
If $n_2 \not\equiv -2t \pmod{9}$ and $t\equiv n_2 \pmod{3}$, then we have $a=-3de^2c\equiv 0 \pmod{3}$ and $b=
-de^2(2n_1+3n_2) =-3de^2(2t+n_2)\equiv 0 \pmod{9}$ from (\ref{eq:a,b}).
Therefore, we have
$\varepsilon_1=1$. If $t\not\equiv n_2 \pmod{3}$, then we have $a=-de^2c/3=-3d(e/3)^2c\equiv 6 \pmod{9}$ and
$b=-de^2 (2n_1+3n_2)/3^3=-d(e/3)^2 (2t+n_2)\not\equiv 0\pmod{9}$ from Lemmas~\ref{lem:equiv-0,1}, \ref{lem:m} (3) (ii) and
(\ref{eq:a,b}). We shall show that $b^2+a-1 \not\equiv 0\pmod{9}$.
We have $b^2+a-1 \equiv b^2-4 \equiv (b+2)(b-2) \pmod{9}$.
Since $3\nmid c$, we have $c^3\equiv \pm 1 \pmod{9}$ and hence
\begin{align*}
b\pm 2 & \equiv \pm d \left(\frac{e}{3} \right)^2 c^3 (2t+n_2)\pm 2 \pmod{9} \\
& \equiv \pm \frac{1}{3^2} (n_1^2+3n_1n_2+9n_2^2)(2t+n_2)\pm 2\\
& \equiv \pm (t^2+tn_2+n_2^2)(2t+n_2)\pm 2 \\
& \equiv \pm [2t^3+3tn_2(t+n_2)+n_2^3)] \pm 2.
\end{align*}
If $3\nmid t$, then we have $t+n_2 \equiv 0\pmod{3}$ from $t\not\equiv n_2 \pmod{3}$ and
$3\nmid n_2$.
We obtain $b\pm2 \not\equiv 0 \pmod{9}$ since
\[
b\pm 2 \equiv \begin{cases}
\pm n_2^3 \pm 2 \pmod{9} & \text{if $3|t$}, \\
\pm t^3\pm 2 \pmod{9 } & \text{if $3\nmid t$}.
\end{cases}
\]
Since $b+2 \equiv b-2 \equiv 0\pmod{3}$ does not hold, we have $b^2+a-1 \equiv (b+2)(b-2) \not\equiv 0
\pmod{9}$. From (\ref{eq:Albert-2-1}), we have $\varepsilon_1=0$. If $n_2\equiv -2t \pmod{9}$, then we have
$a=-de^2(c/3) \equiv -1 \pmod{3}$ from Lemmas~\ref{lem:equiv-0,1}, \ref{lem:m} (2) and (\ref{eq:a,b}).
If $b\not\equiv 0\pmod{3}$,
then we have $b^2+a-1 \not\equiv 0\pmod{3}$. Therefore, we have $\varepsilon_1=0$. Combining these,
we obtain the following.
\[
\varepsilon_1  =\begin{cases}
1 & \text{if $n_2 \not\equiv -2t \pmod{9}$ and $t\equiv n_2 \pmod{3}$}, \\
0 & \text{otherwise}.
\end{cases}
\]
Next, we determine $\varepsilon_2$ and $r$. If $2\nmid n_2$, then we have $b=-(2n_1+3n_2)\Delta_n/m^3 \not\equiv 0\pmod{2}$
from Lemma~\ref{lem:equiv-0,1}, and hence $\varepsilon_2$ and $r$ are given by (\ref{eq:Albert-2-5}).
If $2||n_2$, then since $2n_1+3n_2 \equiv 0 \pmod{4}$ and $2\nmid m$, we have $b=-(2n_1+3n_2)\Delta_n/m^3 \equiv 0 \pmod{4}$.
Furthermore, we have
$a\equiv \Delta_n \equiv de^2c^3 \equiv n_1^2+3n_1n_2+9n_2^2\equiv 1+3n_1n_2\not\equiv 1\pmod{4}$.
The integers $\varepsilon_2$ and $r$ are given by (\ref{eq:Albert-2-4}) in this case. Finally, we consider the case $4|n_2$.
In this case, we have $2||(2n_1+3n_2)$, and since $2\nmid m$, we  have $2|| b =-(2n_1+3n_2)\Delta_n/m^3$.
Furthermore, we have $a\equiv de^2c^3\equiv n_1^2+3n_1n_2+9n_2^2 \equiv n_1^2\equiv 1 \pmod{4}$,
and $\Delta' \equiv \Delta_n \equiv 3 \pmod{4}$ from Lemma~\ref{lem:lambda-mu}. The integers $\varepsilon_2$ and $r$ are given by
(\ref{eq:Albert-2-2}) in this case. Combining all these and Lemmas~\ref{lem:m}, \ref{lem:lambda-mu}, (\ref{eq:theta}), we get the assertions (we obtain an integral basis
$\{ 1,\phi, -\psi \}$ in the case of $n_2<0$ from the results of Albert; however $\{1,\phi,\psi \}$ is also an integral basis).
\end{proof}
\begin{rem}\label{rem:Komatsu}
Komatsu \cite[Corollary~2.8 and Lemma~4.1]{K} gave an integral basis of a cyclic cubic field $L$ using the rational number with
the smallest weil height among the rational numbers $s$ satisfying $L=L_{3s}$.
\end{rem}

For $x_1, x_2, x_3 \in L_n$, we define
the discriminant:
\[
d(x_1,x_2,x_3):= \begin{vmatrix}
x_1 & x_2 & x_3 \\
x_1' & x_2' & x_3'  \\
x_1'' & x_2'' & x_3'' \\
\end{vmatrix}^2.
\]
The discriminant $d(f_n)$ of $f_n(X)$ coincides with $d(1,\rho_n, \rho_n^2)$.
For an integral basis $\{1, \phi, \psi \}$ of $L_n$, let
\[ 
D_{L_n} :=d (1,\phi,\psi)
\]
be the discriminant of $L_n$. It is known from the conductor-discriminant formula that $D_{L_n}=\mathfrak f_{L_n}^2$,
where $\mathfrak f_{L_n}$ is the conductor of $L_n$ (\cite[Theorem~3.11]{W1}). From Proposition~\ref{prop:IB} and 
$d(1,\rho_n,\rho_n^2)=\Delta_n^2/n_2^4$, we obtain the discriminant of $L_n$. If $n$ is an integer, then we have $n_2= \pm 1$, and
the conductor  is known (see Cusick \cite[Lemma~1]{C} when $\Delta_n$ is square-free, Washington \cite[Proposition~1]{W2}
when $n\not\equiv 3 \pmod{9}$, and Kashio-Sekigawa \cite{KS} in general). It was also obtained for a rational number $n$ by Komatsu \cite[Lemma~1.4]{K}
although the formulation is slightly different.
\begin{cor}\label{cor:D}
We have the following:
\begin{enumerate}
\item[(1)] If $3\nmid n_1$, then we have $D_{L_n} =d^2e^2$.
\item[(2)] If $n_1=3t\, (t\in \Z)$ and $n_2 \equiv -2t \pmod{9}$, then we have $D_{L_n} =d^2e^2$.
\item[(3)] If $n_1=3t\, (t\in \Z)$ and $n_2 \not\equiv -2t \pmod{9}$, then we have the following.
\begin{enumerate}
\item[(i)] If $t\equiv n_2 \pmod{3}$, then $D_{L_n}=3^4d^2e^2$.
\item[(ii)] If $t\not\equiv n_2 \pmod{3}$, then $D_{L_n}=3^2 d^2e^2$.
\end{enumerate}
\end{enumerate}
\end{cor}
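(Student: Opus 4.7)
The plan is to apply the standard change-of-basis formula for the discriminant: for elements $x_1,x_2,x_3\in L_n$ written linearly in terms of $1,\rho_n,\rho_n^2$ via a $3\times 3$ matrix $M$, one has $d(x_1,x_2,x_3)=(\det M)^2\, d(1,\rho_n,\rho_n^2)$. Taking $(x_1,x_2,x_3)=(1,\phi,\psi)$ for the integral basis produced by Proposition~\ref{prop:IB}, and using the identity $d(1,\rho_n,\rho_n^2)=d(f_n)=\Delta_n^2/n_2^4=d^2e^4c^6/n_2^4$ recalled in the introduction, the problem reduces to computing $\det M$ in each of the four cases.

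The essential observation is that Proposition~\ref{prop:IB} delivers a \emph{triangular} basis: $\phi$ lies in $\Q\cdot 1+\Q\cdot\rho_n$ and $\psi$ lies in $\Q\cdot 1+\Q\cdot\rho_n+\Q\cdot\rho_n^2$. Consequently $M$ is lower triangular and $\det M$ is simply the product of the three diagonal entries. These are $1$, the coefficient of $\rho_n$ in $\phi$ (namely $n_2/m$ when $3\nmid n_1$ and $3n_2/m$ when $3\mid n_1$), and the coefficient of $\rho_n^2$ in $\psi$ (namely $9n_2^2/u$). None of the complicated lower-triangular entries involving $r$ or the constant term of $\psi$ play any role in the computation.

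It then remains to substitute the values of $m$ from Lemma~\ref{lem:m} and of $u$ from Proposition~\ref{prop:IB} case by case, and to simplify using $\Delta_n=de^2c^3$. In case (1) we have $m=c$, $u=9ec^2n_2$, giving $\det M=n_2^2/(ec^3)$ and hence $D_{L_n}=d^2e^2$. In case (2) the extra factor of $3$ in $m=3c$ cancels the factor of $3$ appearing in the $\rho_n$-coefficient $3n_2/m$ of $\phi$, so $\det M$ is again $n_2^2/(ec^3)$ and $D_{L_n}=d^2e^2$. In case (3)(i) we have $m=c$ but the smaller $u=3ec^2n_2$, while $\phi$ contributes the larger coefficient $3n_2/c$, so $\det M=9n_2^2/(ec^3)$ and $D_{L_n}=3^4d^2e^2$. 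Finally, in case (3)(ii) with $m=3c$ and $u=3ec^2n_2$, one gets $\det M=3n_2^2/(ec^3)$ and $D_{L_n}=3^2d^2e^2$.

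There is no real obstacle here: the argument is entirely mechanical once the triangular structure of the integral basis is recognized. The only thing to watch carefully is the bookkeeping of the powers of $3$ in $m$ and $u$ across the four subcases, which is precisely what determines whether an additional factor $3^2$ or $3^4$ appears in the discriminant.
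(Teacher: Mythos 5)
Your proposal is correct and is exactly the argument the paper intends: the corollary is stated immediately after the remark that it follows ``from Proposition~\ref{prop:IB} and $d(1,\rho_n,\rho_n^2)=\Delta_n^2/n_2^4$,'' i.e.\ the triangular change-of-basis determinant $1\cdot(3n_2/(3m)\text{ or }3n_2/m)\cdot(9n_2^2/u)$ squared times $d^2e^4c^6/n_2^4$. Your case-by-case bookkeeping of $m$ and $u$ matches Lemma~\ref{lem:m} and Proposition~\ref{prop:IB}, and all four resulting values of $D_{L_n}$ check out.
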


From the corollary and Lemma~\ref{lem:m}, it follows that $L_n/\Q$ is tamely ramified in the cases of (1) and (2) of Corollary~\ref{cor:D},
and wildely ramified in the case of (3).
\section{Branch classes and associated orders}\label{sec:BA}

In this  section, we consider the Galois module structure of the ring of integers of an abelian number field
that was obtained by Leopoldt \cite{Leo}.
We refer to  a well-written article \cite{Le} by Lettl that simplifies Leopoldt's work.
Let $L$ be an abelian number field with  Galois group $G:=\Gal(L/\Q)$ 
and conductor $\mathfrak f$. Let $\frakX$  be the group of Dirichlet characters  associated to $L$.
Let $v_p(x)$  denote the normalized $p$-adic valuation of $x \in \Q$ for a prime number $p$.
For any $m\in \mathbb Z_{>0}$, we put
\[
p (m) =\prod_{\substack{ 
p\mid m
\\
p\neq 2
}} p,\qquad 
q(m) = \prod_{\substack{
p \\
v_p(m) \geq 2}} p^{v_p(m)},
\]
where the first product runs over all odd prime numbers $p$ dividing $m$, and the second product runs over all
prime numbers $p$ that satisfy $v_p (m)\geq 2$.
Put
\[
\calD (\mathfrak f)= \{ m\in \Z_{>0} \, |\, p(\mathfrak f) | m,\ m| \mathfrak f,\ m\not\equiv 2 \pmod{4}\}.
\]
 We define a {\it branch class} of $\frakX$ for any $m\in \calD (\mathfrak f)$ as follows:
\[
\Phi_m := \{ \chi \in \frakX \, |\, q(\mathfrak f_{\chi})=q(m) \},
\]
where $\mathfrak f_{\chi}$ is the conductor of $\chi$.
We have $\frakX=\coprod_{m\in \calD (\mathfrak f)} \Phi_m$ (disjoint union). For any $\chi \in \frakX$,
let
\[
{\bf e}_{\chi} = \frac{1}{[L:\Q]} \sum_{\sigma \in G} \chi^{-1} (\sigma ) \sigma 
\]
be the idempotent. Furthermore, for any $m \in \calD (\mathfrak f)$, let
\[
{\bf e}_m = \sum_{\chi \in \Phi_m} {\bf e}_{\chi}.
\]
Since the branch class $\Phi_m$ is closed under conjugation,
we obtain ${\bf e}_m \in \Q [G]$.
Let $\calO_L$ be the ring of integers of $L$.
The action of the group ring $\Q[G]$ on $L$ is given as follows:
\[
x\, a :=\sum_{a\in G} n_{\sigma} \sigma (a) \ \left( x= \sum_{\sigma \in G} n_{\sigma} \sigma \in \Q [G],\, a\in L \right).
\]
We define  {\it the associated order} for $L/\Q$ as follows:
\[
\calA_{L/\Q}:= \{ x\in \Q [G] \, |\, x \, \calO_{L} \subset \calO_{L} \}.
\]
Leopoldt showed that $\calO_{L}$ is a free module of rank $1$ over $\calA_{L/\Q}$.
The generator $T \, (\in \calO_L)$ is given by
\[
T=\sum_{m \in \calD (\mathfrak f)} \eta_m,\quad \eta_m:= \Tr_{\Q (\zeta_m)/(L\cap \Q(\zeta_m))} (\zeta_m) \, \text{ (Gaussian period)}.
\]
\begin{theo}[Leopoldt \cite{Leo}]\label{theo:Leopoldt}
Let $L$ be an abelien number field with Galois group $G$. The ring of integers $\calO_L$ is a free module of rank $1$ over $\calA_{L/\Q}$, and we have
\[
\calO_L=\bigoplus_{m \in \calD (\mathfrak f)} \Z [G]\, \eta_m =\calA_{L/\Q}\,  T
\]
and
\[
\calA_{L/\Q}=\Z [G][\{{\bf e}_m\, |\, m\in \calD (\mathfrak f) \}].
\]
\end{theo}

Let $n=n_1/n_2$ be a rational number where the integers $n_1$ and $n_2$ are coprime.
Suppose that the cubic polynomial $f_n(X)$ is irreducible over $\Q$.
Let $L_n$ be the cubic field defined in \S \ref{sec:intro}. We express as in \S \ref{sec:intro} that 
$\Delta_n=n_1^2+3n_1n_2+9n_2^2=de^2c^3$ using $d,e,c \in \Z_{>0}$, where $d$ and $e$ are square-free
and $(d,e)=1$. According to Corollary~\ref{cor:D} and $D_{L_n}=\mathfrak f_{L_n}^2$, the conductor $\mathfrak f_{L_n}$
is given as follows.

\begin{align}
& \text{(1)\ If $3\nmid n_1$, then we have $\mathfrak f_{L_n}=de$.} \label{eq:f} \\
& \text{(2)\ If $n_1=3t\, (t\in \Z)$ and $n_2\equiv -2t \pmod{9}$, then we have $\mathfrak f_{L_n}=de$.} \notag \\
& \text{(3)\ If $n_1=3t\, (t\in \Z)$ and $n_2 \not\equiv -2t \pmod{9}$, then we have the following. } \notag \\
& \qquad \text{(i)\ If $t\equiv n_2 \pmod{3}$, then $\mathfrak f_{L_n}=3^2de$.} \notag \\
& \qquad  \text{(ii)\ If $t\not\equiv n_2 \pmod{3}$, then $\mathfrak f_{L_n}=3de$.}\notag
\end{align}

From Lemma~\ref{lem:m}, the set $\calD (\mathfrak f_{L_n})$ is given by
\begin{equation}\label{eq:D(f)}
\calD (\mathfrak f_{L_n }) =\begin{cases}
\{ 9de, 3de \} & \text{if $n_1=3t\, (t\in \Z),  n_2 \not\equiv -2t \!\! \pmod{9}$ and $t\equiv n_2 \!\! \pmod{3}$}, \\
\{ 3de, de \} &  \text{if $n_1=3t\, (t\in \Z)$ and $t\not\equiv n_2 \!\! \pmod{3}$}, \\
\{ de \} & \text{otherwise},
\end{cases}
\end{equation}
and hence the associated order $\calA_{L_n/\Q}$ and the generator $T$ of $\calO_{L_n}$ are given by
\begin{equation}\label{eq:A}
\calA_{L_n/\Q}=\begin{cases}
\Z [G] [{\bf e}_{9de}, {\bf e}_{3de}] & \text{if $n_1=3t\, (t\in \Z),  n_2 \not\equiv -2t \!\! \pmod{9}$ and $t\equiv n_2 \!\! \pmod{3}$}, \\
\Z [G] [{\bf e}_{3de}, {\bf e}_{de}] &  \text{if $n_1=3t\, (t\in \Z)$ and $t\not\equiv n_2 \!\! \pmod{3}$}, \\
\Z [G][{\bf e}_{de} ] &  \text{otherwise},
\end{cases}
\end{equation}
\begin{equation}\label{eq:T}
T= \sum_{m \in \calD (\mathfrak f_{L_n} )} \eta_m =\begin{cases}
\eta_{9de}+\eta_{3de}& \text{if $n_1=3t\, (t\in \Z),  n_2 \not\equiv -2t \!\! \pmod{9}$ and $t\equiv n_2 \!\! \pmod{3}$}, \\
\eta_{3de}+\eta_{de} &   \text{if $n_1=3t\, (t\in \Z)$ and $t\not\equiv n_2 \!\! \pmod{3}$}, \\
\eta_{de}&  \text{otherwise}.
\end{cases}
\end{equation}

\section{Tamely ramified cases}\label{sec:tame}
Let $n_1/n_2$ be a rational number where the integers $n_1$ and $n_2$ are coprime.
Suppose that the cubic polynomial $f_n(X)$ is irreducible over $\Q$ and consider the cases (1) and (2)
of (\ref{eq:f}). In these cases, $L_n/\Q$ is tamely ramified since the conductor of $L_n$ is
$\mathfrak f_{L_n}=de$  is square-free.
The corresponding group of Dirichlet characters is $\frakX=\Phi_{de} =
\{ {\bf 1}, \chi, \chi^2 \}$, where $\bf 1$ is the trivial character and $\chi$ is a character given by $\chi : G \to \C^{\times},\
\sigma \mapsto \zeta_3$. Since ${\bf e}_{de}=1$, we have $\calA_{L_n/\Q}=\Z [G][{\bf e}_{de}]=\Z [G]$ and
$\calO_{L_n}=\Z [G]  \eta_{de}$.
The ring of integers $\calO_{L_n}$ is isomorphic to $\Z [G]$ as $\Z [G]$-module, and the conjugates $\{
\eta_{de}, \eta_{de}', \eta_{de}'' \}$ of $\eta_{de}$ is a normal integral basis of $L_n$.
 Put $\rho := \rho_n$, $\zeta := \zeta_3$ and  $\Delta_n =n_1^2+3n_1n_2+9n_2^2=A_n A_n'$ with
$A_n:=n_1+3n_2(1+\zeta),\, A_n':=n_1+3n_2(1+\zeta^2)$.
In these cases, the generator of the $\Z [G]$-module $\calO_{L_n}$ is  obtained using roots of $f_n(X)$ as follows.
\begin{theo}\label{theo:tame}
Let $n=n_1/n_2$ be a rational number where the integers $n_1$ and $n_2$ are coprime.
Suppose that the cubic polynomial $f_n(X)$ is irreducible over $\Q$, and $3\nmid n_1$ or
$n_1=3t\, (t\in \Z),\, n_2 \equiv -2t \pmod{9}$. 
There exist integers $a_0$ and $a_1$  that satisfy $ec=a_0^2-a_0a_1+a_1^2$, and
$a_0+a_1\zeta $ divides $ A_n$ in $\Z [\zeta]$.
Furthermore, for any such $a_0$ and $a_1$, we  have $(\varepsilon ec^2-n_1(a_0+a_1))/3\in \Z$ for $\varepsilon \in \{ \pm 1\} $ given by
\[
\varepsilon: =
\begin{cases}
\left( \frac{n_1 (a_0+a_1)}{3} \right) & \text{if $3\nmid n_1$}, \\
\left( \frac{n_2a_0}{3} \right) & \text{if $n_1=3t\, (t\in \Z),\, n_2\equiv -2t \!\! \pmod{9}$}, 
\end{cases}
\]
where $\left( \frac{\cdot}{3} \right)$ is the Legendre symbol, and 
\begin{equation*}
\alpha :=\frac{1}{ec^2}\left( n_2(a_0\rho_n+a_1\rho_n')+\frac{1}{3} (\varepsilon ec^2-n_1(a_0+a_1) ) \right)
\end{equation*}
is a generator of a normal integral basis of the cyclic cubic field $L_n$, namely, we have
$\calO_{L_n}=\Z [G]  \, \alpha =\calA_{L_n/\Q}\, \alpha$.
\end{theo}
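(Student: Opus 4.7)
The plan is to verify, in sequence, four claims: (i) existence of $a_0, a_1$; (ii) the congruence $(\varepsilon ec^2 - n_1(a_0+a_1))/3 \in \Z$; (iii) $\alpha \in \calO_{L_n}$; and (iv) $\{\alpha, \alpha', \alpha''\}$ is a $\Z$-basis of $\calO_{L_n}$. In the tame cases (\ref{eq:A}) gives $\calA_{L_n/\Q} = \Z[G]$, so (iv) at once yields the theorem. For (i), Lemmas~\ref{lem:equiv-0,1} and \ref{lem:m} imply $3 \nmid ec$ and $p \equiv 1 \pmod 3$ for every prime $p \mid ec$, so each such $p$ splits in $\Z[\zeta]$ as $\pi_p \bar\pi_p$; by Lemma~\ref{lem:1-zeta} the ideals $(A_n)$ and $(A_n')$ are coprime in $\Z[\zeta]$ under our hypotheses. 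From $A_n A_n' = de^2c^3$, selecting for each $p \mid ec$ the prime above $p$ dividing $A_n$, with its multiplicity, yields a principal ideal of norm $ec$ dividing $(A_n)$, whose generator is $a_0 + a_1\zeta$. For (ii), $ec \equiv (a_0+a_1)^2 \pmod 3$ together with the Legendre-symbol definition of $\varepsilon$ gives $\varepsilon ec^2 \equiv n_1(a_0+a_1) \pmod 3$, reducing to a short case check (the second subcase uses $n_2 \equiv -2t \pmod 9$).

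For (iii) and (iv), set $R := \rho_n + \zeta \rho_n' + \zeta^2 \rho_n''$ and $B := a_0 + a_1\zeta$. Since $\sigma(R) = \zeta^2 R$, a direct expansion yields
\begin{equation*}
3 n_2 (a_0 \rho_n + a_1 \rho_n') = n_1 (a_0 + a_1) + n_2 R \bar B + n_2 \bar R B,
\end{equation*}
whence $\alpha = \varepsilon/3 + n_2 (R\bar B + \bar R B)/(3 e c^2)$. From this I read off $\Tr(\alpha) = \varepsilon$, $\alpha + \zeta \alpha' + \zeta^2 \alpha'' = n_2 R \bar B/(ec^2)$, and $\alpha + \zeta^2 \alpha' + \zeta \alpha'' = n_2 \bar R B/(ec^2)$, the product of the last two being $n_2^2 R \bar R \cdot B \bar B/(ec^2)^2 = \Delta_n \cdot ec/(ec^2)^2 = de$. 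The circulant determinant then gives $d(\alpha, \alpha', \alpha'') = \varepsilon^2 (de)^2 = d^2 e^2 = D_{L_n}$ by Corollary~\ref{cor:D}, so once $\alpha \in \calO_{L_n}$ is established, $\{\alpha, \alpha', \alpha''\}$ must be a $\Z$-basis of $\calO_{L_n}$.

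For the integrality of $\alpha$, the minimal polynomial $X^3 - \varepsilon X^2 + s_\alpha X - N(\alpha)$ has $s_\alpha = (\Tr(\alpha)^2 - R_\alpha \bar R_\alpha)/3 = (1 - de)/3 \in \Z$ because $de \equiv ec \equiv (a_0+a_1)^2 \equiv 1 \pmod 3$. The remaining coefficient is computed via the $\sigma$-eigenspace decomposition $\alpha_0 := \varepsilon/3$, $\alpha_1 := n_2 \bar R B/(3ec^2)$, $\alpha_2 := n_2 R \bar B/(3 ec^2)$ and the identity $(x+y+z)(x+\zeta y+\zeta^2 z)(x+\zeta^2 y+\zeta z) = x^3 + y^3 + z^3 - 3xyz$, which gives
\begin{equation*}
27 N(\alpha) = \varepsilon (1 - 3de) + \frac{n_2^3}{e^3 c^6} \Tr_{\Q(\zeta)/\Q}(R^3 \bar B^3).
\end{equation*}
The hard part is verifying this lies in $27 \Z$: I would expand $R^3 \in \Q(\zeta)$ using Vieta's relations for $f_n$ and exploit the divisibility $B \mid A_n$ in $\Z[\zeta]$ together with the factorization structure of Lemma~\ref{lem:m} to cancel the denominator $e^3 c^6$ and match the residue modulo $27$ using (ii). Once this integrality check is in hand, the discriminant identity above concludes the proof.
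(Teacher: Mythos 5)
Your route is genuinely different from the paper's: the paper starts from Leopoldt's theorem ($\calO_{L_n}=\Z[G]\,\eta_{de}$ in the tame case) and the Acciaro--Fieker method, transporting the integral basis of Proposition~\ref{prop:IB} into an ideal $(g_1,g_2,g_3)$ of $\Z[G]$, computing its image under $\nu:\Z[G]\to\Z[\zeta]$ prime by prime, and then recovering $\alpha$ from the generator $g$ up to a unit and a trace correction. You instead propose to verify directly that the stated $\alpha$ is an algebraic integer and that $d(\alpha,\alpha',\alpha'')=D_{L_n}$, which by Corollary~\ref{cor:D} would force $\{\alpha,\alpha',\alpha''\}$ to be an integral basis. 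Your resolvent identities are correct: with $R=\rho_n+\zeta\rho_n'+\zeta^2\rho_n''$ and $B=a_0+a_1\zeta$ one indeed has $3n_2(a_0\rho_n+a_1\rho_n')=n_1(a_0+a_1)+n_2(R\bar B+\bar RB)$, $\Tr(\alpha)=\varepsilon$, and $n_2^2R\bar R\cdot B\bar B/(ec^2)^2=de$, so the discriminant comparison and the integrality of the second coefficient $(1-de)/3$ both check out (note $d\equiv e\equiv 1\pmod 3$ because every prime divisor of the conductor is $\equiv 1\pmod 3$ by Lemma~\ref{lem:equiv-0,1}). If completed, this would be a cleaner and more self-contained argument than the paper's, bypassing Proposition~\ref{prop:IB} entirely.

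However, there is a genuine gap at the one step that carries all the arithmetic content: the verification that $N(\alpha)\in\Z$, equivalently that
$\varepsilon(1-3de)+e^{-3}c^{-6}\,\Tr_{\Q(\zeta)/\Q}(n_2^3R^3\bar B^3)$ lies in $27\Z$. You explicitly defer this ("the hard part"), but it is not a routine cancellation. Using Vieta for $f_n$ one finds the exact identity $n_2^3R^3=\Delta_nA_n=A_n^2A_n'$ (for a suitable labelling of $R$ versus $\bar R$), and a valuation count shows $e^3c^6=B^3\bar B^3c^3$ divides $A_n^2A_n'\bar B^3$ away from $3$; but at the prime $(1-\zeta)$ in case (2) of (\ref{eq:f}) the quotient has valuation exactly $3$, so the required divisibility by $27$ only emerges from a cancellation between $\varepsilon(1-3de)$ and the trace term, and this is precisely where the specific choice $\varepsilon=\left(\frac{n_2a_0}{3}\right)$ and the hypothesis $n_2\equiv-2t\pmod 9$ must be used. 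In the paper this is the role of the norm computation (\ref{eq:R-tame}) and the ensuing congruences (\ref{eq:mod3}); without an analogue of that computation your argument does not determine the sign $\varepsilon$ and does not establish $\alpha\in\calO_{L_n}$, so the proof is incomplete at its decisive point. A second, smaller issue: your existence argument for $(a_0,a_1)$ takes "the prime above $p$ dividing $A_n$ with its multiplicity" for each $p\mid ec$, but at $p=3$ in case (2) the relevant prime $(1-\zeta)$ is ramified and divides both $A_n$ and $A_n'$ (Lemma~\ref{lem:1-zeta} does not give coprimality there), and its multiplicity in $A_n$ is $3$ while the correct exponent is $1$, as in (\ref{eq:a0a1-tame}); this is fixable but needs to be said.
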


\begin{rem}\label{rem:tame}
\begin{enumerate}
\item[(1)] Since $\Z [G]^{\times} =\{ \pm 1_G, \pm \sigma, \pm \sigma^2\}$,
a normal integral basis 
$ \{ \eta_{de}, \eta_{de}' ,\eta_{de}'' \}$ 
coincides with $ \{ \alpha, \alpha', \alpha'' \}$ or $\{ -\alpha,
-\alpha', -\alpha'' \}$, where $\alpha$ is the element defined in the theorem above.
\item[(2)] The  result in the cases of the simplest cubic fields (namely, when $n_2=\pm 1$) has been obtained
in \cite{HA1}. Furthermore, the result for Lehmer's cyclic quintic fields has been obtained in \cite{HA2}.
\end{enumerate}
\end{rem}
\begin{proof}
Let $\{1,\phi, \psi \}$ be the integral basis of $L_n$ given by Proposition~\ref{prop:IB}. We find $\alpha$ satisfying 
$\calO_{L_n}=\Z [G]  \alpha$ using the method proposed by Acciaro and Fieker \cite{AF}. From $\rho+\rho'+\rho'' =n$
and $\rho^2=\rho'+(n+1)\rho +2$, we obtain the following:
\begin{align}
1 & =\frac{n_2}{n_1} (\rho+\rho'+\rho'' ), \notag \\
\phi & =\begin{cases}
\dfrac{n_2}{3cn_1} (3n_1 \rho-(n_1+cr)(\rho+\rho'+\rho'')) & \text{ in the case of (\ref{eq:f}),(1)}, \\
& \\
\dfrac{n_2}{3c} (3\rho-(\rho+\rho'+\rho'')) & \text{ in the case of  (\ref{eq:f}),(2)}, 
\end{cases} \label{eq:IB-tame} \\
\psi & =\frac{1}{9ec^2n_1} (9 (n_1n_2 \rho' +n_1 (n_1+n_2)\rho +2n_2^2 (\rho+\rho'+\rho'')) \notag \\
 & \quad +3n_1 (mr-2n_1)\rho +(m^2r^2+m^2a-mn_1r+n_1^2)(\rho+\rho'+\rho'')), \notag
\end{align}
where
\[
m=\begin{cases}
c & \text{ in the case of (\ref{eq:f}),(1)}, \\
3c & \text{ in the case of (\ref{eq:f}),(2)}.
\end{cases}
\]
Put $\ell =9ec^2n_1$. We have
\[
1=g_1\, \frac{\rho}{\ell},\quad \phi =g_2\, \frac{\rho}{\ell}, \quad \psi=g_3\, \frac{\rho}{\ell},
\]
where $g_1,g_2,g_3 \in \Z [G]$ are given by
\begin{align}
g_1 & =\frac{\ell n_2}{n_1} (1+\sigma+\sigma^2), \notag \\
g_2 & =\begin{cases}
\dfrac{\ell n_2}{3cn_1} (3n_1-(n_1+cr)(1+\sigma+\sigma^2)) & \text{  in the case of (\ref{eq:f}),(1)}, \\
 & \\
\dfrac{\ell n_2}{3c} (3-(1+\sigma+\sigma^2)) & \text{  in the case of (\ref{eq:f}),(2)}, 
\end{cases} \label{eq:g-tame} \\
g_3 & =\frac{\ell}{9ec^2n_1} (9(n_1n_2 \sigma+n_1(n_1+n_2)+2n_2^2(1+\sigma+\sigma^2)) \notag\\
& \quad +3n_1 (mr-2n_1)+(m^2r^2 +m^2a-mn_1r+n_1^2)(1+\sigma+\sigma^2)). \notag
\end{align}
Since $\{ 1,\phi, \psi \}$ is an integral basis, we obtain from (\ref{eq:g-tame}) that
\begin{equation}\label{eq:O-tame}
\calO_{L_n}=\Z+\phi \Z+\psi \Z =(g_1\Z[G]+g_2 \Z[G]+g_3 \Z [G])\frac{\rho}{\ell}.
\end{equation}
Since $\calO_{L_n}=\Z [G] \eta_{de}$, there exists $g\in \Z [G]$ satisfying
\begin{equation}\label{eq:eta-tame}
g \, \frac{\rho}{\ell}=\eta_{de}.
\end{equation}
Therefore, we have 
\begin{equation}\label{eq:eta=g-tame}
\calO_{L_n}=g\Z[G] \, \frac{\rho}{\ell} =\Z [G] \,  \eta_{de}.
\end{equation}
From (\ref{eq:O-tame}) and (\ref{eq:eta=g-tame}), we obtain the equality of ideals of $\Z [G]$:
\[
(g)=\mathrm{Ann}_{\Z [G]} \left( \frac{\rho}{\ell} \right) =(g_1,g_2,g_3 )+\mathrm{Ann}_{\Z [G]} \left( \frac{\rho}{\ell} \right).
\]
It follows from $n\ne 0$ that $\{ \rho/\ell, \rho'/\ell, \rho''/\ell \}$ is a normal basis (\cite[Proof of Theorem~4.4]{HA1}).
Therefore, we have $\mathrm{Ann}_{\Z [G]}(\rho/\ell)=0$, and we obtain the equality of ideals of $\Z [G]$:
\begin{equation}\label{eq:g123-tame}
(g)=(g_1,g_2,g_3).
\end{equation}
Consider the surjective ring homomorphism 
\[
\nu: \Z [G] \longrightarrow \Z [\zeta]
\]
defined by $\nu (\sigma )=\zeta$. We calculate the image of the ideal $I:=(g)=(g_1,g_2,g_3)$ by $\nu$.
Since $\nu$ is surjective, we obtain the ideal of $\Z [\zeta]$:
\begin{equation}\label{eq:nu123-tame}
\nu (I)=(\nu (g))=(\nu (g_1), \nu (g_2), \nu (g_3)).
\end{equation}
From (\ref{eq:g-tame}), the elements $\nu (g_1), \nu (g_2), \nu (g_3)$ are given by
\begin{align*}
\nu (g_1) & =\dfrac{\ell n_2}{n_1} (1+\zeta+\zeta^2)=0, \\
\nu (g_2) & =\begin{cases}
\dfrac{\ell n_2}{3cn_1} (3n_1-(n_1+cr)(1+\zeta+\zeta^2)) & \text{ in the case of (\ref{eq:f}),(1).} \\
& \\
\dfrac{\ell n_2}{3c} (3- (1+\zeta+\zeta^2)) & \text{ in the case of (\ref{eq:f}),(2)} 
\end{cases} \\
& =9ecn_1n_2, \\
\nu (g_3) & =\dfrac{\ell}{9ec^2n_1} (9(n_1n_2 \zeta +n_1(n_1+n_2)+2n_2^2 (1+\zeta+\zeta^2)) \\
& \quad +3n_1 (mr -2n_1)+(m^2 r^2 +m^2 a-mn_1 r+n_1^2)(1+\zeta+\zeta^2)) \\
& =3n_1 (A_n +mr),
\end{align*}
where $A_n=n_1+3n_2 (1+\zeta)$. Therefore, we obtain
\begin{equation}\label{eq:nuI-tame}
\nu (I)=( \nu (g))=3n_1 (3ecn_2 , A_n+mr).
\end{equation}
Let $p_1,\ldots, p_k$ be different prime numbers satisfying
\begin{equation}\label{eq:p1k-tame}
p_1^{\iota_1} \cdots p_k^{\iota_k} =\begin{cases}
ec & \text{ in the case of (\ref{eq:f}),(1)}, \\
e\cdot\frac{c}{3} & \text{ in the case of (\ref{eq:f}),(2)},
\end{cases}
\end{equation}
with $\iota_1,\ldots,\iota_k \in \Z_{>0}$. For
$i\in \{1,\ldots, k \}$, we put $p_i =\pi_i \pi_i'$, where $\pi_i$ and $\pi_i'$ are prime elements
of $\Z [\zeta]$ satisfying $ \pi_i |A_n$ and $\pi_i' |A_n'$. 
Let $a_0 $ and $a_1$ be integers satisfying 
\begin{equation}\label{eq:a0a1-tame}
a_0 +a_1 \zeta =\begin{cases}
\pi_1^{\iota_1} \cdots \pi_k^{\iota_k} & \text{ in the case of (\ref{eq:f}),(1)}, \\
(1-\zeta) \pi_1^{\iota_1} \cdots \pi_k^{\iota_k}  & \text{ in the case of (\ref{eq:f}),(2)}. 
\end{cases}
\end{equation}
Then $a_0 $ and $a_1$ satisfy
\begin{equation}\label{eq:df-a0a1-tame}
ec=a_0^2-a_0a_1+a_1^2=(a_0+a_1\zeta)(a_0+a_1\zeta^2),\quad a_0+a_1\zeta \, | \, A_n.
\end{equation}
Conversely, any integers $a_0$ and $a_1$ satisfying (\ref{eq:df-a0a1-tame}), $a_0+a_1\zeta$ is given by the right-hand side of (\ref{eq:a0a1-tame}) up to a unit of
$\Z [\zeta]$.

Next, we shall prove 
\begin{equation}\label{eq:nuI-gen-tame}
\nu (I)=9n_1n_2 (a_0 +a_1 \zeta).
\end{equation}
Put
\begin{equation}\label{eq:delta-tame}
(3ecn_2, A_n+mr)=(\delta ), \quad \delta \in \Z [\zeta]
\end{equation}
and consider the $\mathfrak p$-adic valuations of $\delta$ for prime ideals $\mathfrak p$ of $\Z [\zeta]$.
Let $v_p (x)$ and $v_{\mathfrak p}(x) $ be the normalized $p$-adic valuation of $x\in \Q$ for a prime number $p$ and the normalized 
$\mathfrak p$-adic valuation of $x\in \Q (\zeta)$ for a prime ideal $\mathfrak p$ of $\Z [\zeta]$, respectively.
Obviously, we have $v_{\mathfrak p}(\delta )=0$ if $v_{\mathfrak p}(3ecn_2)=0$.

First, we consider the $2$-adic valuation of $\delta$.
Let $\mathfrak p =(2)$ be the prime ideal of $\Z [\zeta]$ above $2$. We shall show
\begin{equation}\label{eq:delta2-tame}
v_{\mathfrak p} (\delta )=v_2(n_2).
\end{equation}
We have $2\nmid ec$ from Lemma~\ref{lem:equiv-0,1}, and hence $v_2(3ecn_2)=v_2(n_2)$.
Therefore, (\ref{eq:delta2-tame}) follows from
\begin{equation}\label{eq:A2-tame}
A_n +mr =n_1+3n_2(1+\zeta)+mr \equiv 0 \pmod{ (2^{v_2(n_2)})}.
\end{equation}
We shall show this congruence.
If $2\nmid n_2$, then (\ref{eq:A2-tame}) holds.
We consider the case $2||n_2$.
From Lemmas~\ref{lem:equiv-0,1}, \ref{lem:lambda-mu} and (\ref{eq:a,b}), we have $2\nmid a,\, 2|b$ and $2\nmid Q$.
Furthermore, we have $2\nmid m$ from Lemma~\ref{lem:m}.
According to Proposition~\ref{prop:IB}, we have
\begin{align*}
r & \equiv \begin{cases}
3^{\mu -1}aQ & \text{ in the case of (\ref{eq:f}), (1)}, \\
3Q^2a & \text{ in the case of (\ref{eq:f}), (2) },
\end{cases} \\
& \equiv 1 \pmod{2}.
\end{align*}
Combining all these, we have $A_n +mr \equiv 0 \pmod{ (2)}$,  and hence (\ref{eq:A2-tame}) holds in  the case of
$2 ||n_2$. Next, we consider the case of $4|n_2$. In both cases (1) and (2) of (\ref{eq:f}),
according to Proposition~\ref{prop:IB} and Lemma~\ref{lem:lambda-mu}, we have $r\equiv (2^{\lambda-1} -3\cdot b/2)a^{\psi (2^{\lambda})}
\pmod{2^{\lambda}}$ and $\lambda =v_2(n_2)$.
In the case of (\ref{eq:f}), (1), we have $m=c$, and from (\ref{eq:a,b}), we have
\begin{align*}
ac^2 (A_n+mr) & \equiv ac^2 (n_1+3n_2(1+\zeta)) +c^3 (2^{\lambda-1} -3\cdot b/2) \\
& \equiv -3n_1 \Delta_n +2^{\lambda-1} c^3 +\dfrac{3}{2} (2n_1+3n_2)\Delta_n \\
& \equiv -3n_1^2+2^{\lambda-1} c^3 +\dfrac{3}{2} (2n_1+3n_2)n_1^2 \\
& \equiv 2^{\lambda -1} \left( c^3 +9n_1^2 \cdot \dfrac{n_2}{2^{\lambda}} \right) \\
& \equiv 0 \pmod{ (2^{\lambda}) }.
\end{align*}
Since $2\nmid ac^2$, (\ref{eq:A2-tame}) holds in the case of $4|n_2$ and (\ref{eq:f}), (1).
In the case of $4|n_2$ and (\ref{eq:f}),(2), we have
$m=3c$, and from (\ref{eq:a,b}), we have
\begin{align*}
a(A_n+mr) & \equiv a(n_1+3n_2 (1+\zeta))+3c (2^{\lambda-1} -3\cdot b/2) \\
& \equiv -de^2 \dfrac{c}{3}n_1+3\cdot 2^{\lambda-1}c +\dfrac{1}{2\cdot 3} (2n_1+3n_2) de^2c \\
& \equiv 2^{\lambda -1} c \left( 3+\dfrac{n_2}{2^{\lambda}} de^2 \right) \\
& \equiv 0 \pmod{ (2^{\lambda})}.
\end{align*}
Since $2\nmid a$, (\ref{eq:A2-tame}) holds in the case of $4|n_2$ and (\ref{eq:f}),(2).
Hence, we obtain (\ref{eq:delta2-tame}) on the $2$-adic valuation of $\delta$.

Next,we consider the $3$-adic valuation of $\delta$. Let $\mathfrak p =(1-\zeta)$ be the prime ideal of $\Z [\zeta]$
above $3$. We shall show 
\begin{equation}\label{eq:delta3-tame}
v_{\mathfrak p} (\delta )=
\begin{cases}
2(1+v_3(n_2)) & \text{ in the case of (\ref{eq:f}), (1)},\\
3 & \text{ in the case of (\ref{eq:f}), (2) }.
\end{cases}
\end{equation}
Note that $3\nmid n_2$ in the case of (\ref{eq:f}),(2). From Lemma~\ref{lem:m}, we have
\[
v_{\mathfrak p} (3ecn_2)=2v_3(3ecn_2) =\begin{cases}
2(1+v_3(n_2)) & \text{ in the case of (\ref{eq:f}), (1)},\\
4 & \text{ in the case of (\ref{eq:f}), (2) }.
\end{cases}
\]
Therefore, (\ref{eq:delta3-tame}) follows from
\begin{equation}\label{eq:A3-tame}
v_{\mathfrak p}(A_n+mr)=2v_3(n_2)+3.
\end{equation}
We shall show the equality. In the case of (\ref{eq:f}),(1), according to Proposition~\ref{prop:IB}, Lemma~\ref{lem:lambda-mu}, and
(\ref{eq:a,b}), we have $r\equiv -b (2\cdot a/3)^{\psi (3^{\mu -1})} \equiv -(2n_1+3n_2)(2c)^{\psi (3^{\mu -1})} \pmod{ 3^{\mu -1} }$
and $\mu -1 =v_3 (n_2)+2$. Therefore, we obtain
\[
2 (A_n +mr)\equiv 2A_n -(2n_1+3n_2)\equiv 3n_2 (1+2\zeta) \pmod{ (3^{\mu -1}) }.
\]
Since $v_{\mathfrak p} (1+2\zeta )=1$, (\ref{eq:A3-tame}) holds in the case of (\ref{eq:f}),(1). 
In the case of (\ref{eq:f}),(2), we have $3|c$ and
\[
A_n +mr \equiv A_n \equiv 3t-6t (1+\zeta)\equiv -3t (1+2\zeta) \pmod{ (3^2)}.
\]
Since $v_{\mathfrak p} (1+2\zeta)=1,\, 3\nmid t$ and $3\nmid n_2$, (\ref{eq:A3-tame}) holds in the case of (\ref{eq:f}),(2).
Hence, we obtain (\ref{eq:delta3-tame}) on the $3$-adic valuation of $\delta$.

Next, we consider the $p$-adic valuation of $\delta$ for a prime number $p\, (\ne 2,3)$ dividing $n_2$.
Since $(n_2,ec)=1$, we have $p\nmid ec$; hence, $v_p (3ecn_2)=v_p(n_2)$.
Let $\mathfrak p$ be a prime idela of $\Z [\zeta]$ above $p$. We shall show
\begin{equation}\label{eq:deltan2-tame}
v_{\mathfrak p} (\delta)=v_{\mathfrak p}(n_2).
\end{equation}
The equality follows from
\begin{equation}\label{eq:An2-tame}
A_n +mr \equiv 0 \pmod{ (p^{v_p (n_2)}) } .
\end{equation}
We shall show the congruence. In the case of (\ref{eq:f}),(1), according to Proposition~\ref{prop:IB} and $p^{v_p (n_2)} |Q=
|n_2|/ (2^{v_2 (n_2)} 3^{v_3 (n_2)} )$, we have
\[
r\equiv -b (2\cdot 3^{\mu -1} \cdot a )^{\psi (Q)} 3^{\mu } \equiv -3b (2a)^{\psi (Q) } \pmod{p^{v_p (n_2)} }.
\]
From (\ref{eq:a,b}), we have
\[
c^2 (A_n +mr) \equiv c^2 (n_1+3n_2 (1+\zeta)-3bc (2a)^{\psi (Q) } ) \equiv 0 \pmod{ (p^{v_p (n_2)})}.
\]
Since $p\nmid c$, (\ref{eq:An2-tame}) holds in the case of  (\ref{eq:f}), (1).
In the case of (\ref{eq:f}),(2), according to Proposition~\ref{prop:IB} and $p^{v_p(n_2)} |Q= |n_2|/ (2^{v_2(n_2)}3^{v_3(n_2)}) $,
we have $r\equiv -3b (2a)^{\psi (Q) } \pmod{p^{v_p (n_2)} }$.
From (\ref{eq:a,b}), we have
\[
A_n+mr \equiv n_1+3n_2 (1+\zeta)-9bc (2a)^{\psi (Q)} \equiv 0 \pmod{ (p^{v_p(n_2)}) }.
\]
Therefore, (\ref{eq:An2-tame}) holds in the case of (\ref{eq:f}),(2).
Hence, we obtain (\ref{eq:deltan2-tame}) on 
the $p$-adic valuation of $\delta$ for a prime number $p\, (\ne 2,3)$ dividing $n_2$.

Finally, we consider the $p$-adic valuation of $\delta$ for a prime number $p\, (\ne 2,3)$ dividing $ec$.
Since $(n_2, ec)=1$, we have $p\nmid n_2$; hence, $v_p (3ecn_2)=v_p(ec)$. Since 
$p\equiv 1\pmod{3}$ from Lemma~\ref{lem:equiv-0,1}, we put $p=\pi \pi'$, where $\pi$ and $\pi'$ are prime elements of $\Z [\zeta]$
satisfying $\pi |A_n$ and $\pi' |A_n'$. Since $ec |\Delta_n =A_n A_n'$ and Lemma~\ref{lem:1-zeta}, we have $\pi^{v_p(ec)} |A_n$ and
$\pi' \nmid A_n$. 
According to (\ref{eq:a,b}), we have $e|a$ and $e|b$; hence, we have $e|r$ from Proposition~\ref{prop:IB}.
Put $\mathfrak p =(\pi ) $ and $\mathfrak p'=(\pi')$. We have $A_n +mr \equiv 0 \pmod{\mathfrak p^{v_p(ec)} }$ and
conclude
\begin{equation}\label{eq:deltaec-tame}
v_{\mathfrak p}(\delta )=v_p (ec), \quad v_{\mathfrak p'} (\delta)=0.
\end{equation}

Combining (\ref{eq:nuI-tame}), (\ref{eq:p1k-tame}),  (\ref{eq:a0a1-tame}), (\ref{eq:delta-tame}), (\ref{eq:delta2-tame}), (\ref{eq:delta3-tame}), (\ref{eq:deltan2-tame})
 and (\ref{eq:deltaec-tame}), we obtain (\ref{eq:nuI-gen-tame}).

Put $x=9n_1n_2 (a_0 +a_1 \sigma ) \, (\in \Z [G])$. We have $\nu (x)=9n_1 n_2(a_0+a_1 \zeta)$. From (\ref{eq:nuI-tame}) and 
(\ref{eq:nuI-gen-tame}),
we obtain $\nu (g)=9n_1n_2 (a_0+a_1 \zeta)v$ for some $v\in \Z [\zeta ]^{\times}$.
Since $\Z [\zeta]^{\times} =\{ \pm 1, \pm \zeta, \pm \zeta^2 \}$ and
$\Z [G]^{\times} =\{ \pm 1, \pm \sigma , \pm \sigma^2 \}$, there exists $\xi \in \Z [G]^{\times}$ that satisfies
$\nu (\xi)=v^{-1}$, and we have $\nu (\xi g)=9n_1n_2 (a_0 +a_1 \xi)=\nu (x)$.
We conclude that $\xi g-x \in \Ker (\nu )=(1+\sigma +\sigma^2)$.
Therefore,
there exists $k\in \Z$ that satisfies 
\begin{equation}\label{eq:Tr-tame}
(\xi g -x)  \frac{\rho}{\ell} =k \Tr_{L_n/\Q} \left( \frac{\rho}{\ell} \right),
\end{equation}
where $\ell=9ec^2n_1$. From (\ref{eq:Tr-tame}), we have
\begin{align}
( \xi  g)  \dfrac{\rho}{\ell}  & =x  \dfrac{\rho}{\ell} +k \Tr_{L_n/\Q} \left( \dfrac{\rho}{\ell} \right) \label{eq:rho/ell-tame} \\
& = \dfrac{1}{ec^2} \left( n_2 (a_0\rho +a_1 \rho' )+\dfrac{k}{9n_2} \right). \notag
\end{align}
Since $de$ is square-free and $L_n \cap \Q (\zeta_{de} )=L_n$, we obtain from (\ref{eq:eta-tame})
\begin{align*}
\Tr_{L_n/\Q} \left((\xi g) \dfrac{\rho}{\ell} \right) & = \xi  \Tr_{L_n/\Q} \left( g \dfrac{\rho}{\ell } \right) =
\xi \Tr_{L_n/\Q}(\eta_{de}) =\xi  \Tr_{\Q (\zeta_{de})/\Q} (\zeta_{de}) = \xi  \mu (de)=\pm 1,
\end{align*}
where $\mu$ is the M\"{o}bius function.
Taking the trace on both sides of (\ref{eq:rho/ell-tame}) and multiplying them by $3n_2ec^2$, it follows that
\begin{equation}\label{eq:k-tame}
k=3n_2 (\varepsilon ec^2 -n_1(a_0+a_1)) 
\end{equation}
for $\varepsilon \in \{ \pm 1 \}$. Next, we determine the sign $\varepsilon \in  \{ \pm 1\}$.
From (\ref{eq:rho/ell-tame}), (\ref{eq:eta-tame}), and Lemma~\ref{lem:n2rho}, we have
\[
\frac{k}{9n_2} =ec^2 (\xi g)  \frac{\rho}{\ell} -n_2 (a_0 \rho+a_1 \rho' )\quad \in \calO_{L_n} \cap \Q =\Z,
\]
and hence, $9n_2$ divides $k$. Put $k'=k/(9n_2) \in \Z$.
From (\ref{eq:k-tame}), we obtain 
\begin{equation}\label{eq:k'-tame}
3k' =\varepsilon ec^2 -n_1 (a_0+a_1).
\end{equation}

First, we consider the case of (\ref{eq:f}), (1). In this case we have
$e\equiv c\equiv 1 \pmod{3}$;  hence, (\ref{eq:k'-tame}) yields $\varepsilon \equiv n_1 (a_0 +a_1) \pmod{3}$,
which implies $\varepsilon =\left( \frac{n_1 (a_0+a_1)}{3} \right)$.
We conclude from (\ref{eq:k-tame}) that 
\[
\frac{k}{9n_2}=\frac{1}{3}\left( \left( \frac{n_1(a_0+a_1)}{3} \right) ec^2 -n_1(a_0+a_1) \right) \quad
(\in \Z).
\]
Put $\alpha =(\xi g) (\rho/\ell)$. Then, we have $\calO_{L_n} =\Z [G]  \eta_{de} =
\Z [G]  \alpha$ from (\ref{eq:eta=g-tame}) and the assertion of the theorem in this case holds.

Next, we consider the case of (\ref{eq:f}), (2). First of all, we shall show 
\begin{equation}\label{eq:a0-a1-tame}
a_0 \equiv -a_1 \not\equiv 0 \pmod{3}.
\end{equation}
Since $ec=a_0^2-a_0a_1+a_1^2 \equiv (a_0+a_1)^2 \pmod{3}$ and $3||c$, we obtain $a_0+a_1 \equiv 0 \pmod{3}$.
If $3 |a_0$, then we have $3|a_1$, and this is a contradiction to $ec=a_0^2-a_0a_1+a_1^2$ and $3||c$. Therefore, we obtain (\ref{eq:a0-a1-tame}).
From (\ref{eq:k'-tame}), (\ref{eq:a0-a1-tame}), and  $3 |n_1$ and $3 ||c$, we have $3|k'$.
Put $\alpha = (\xi g)  (\rho/\ell)$. From (\ref{eq:rho/ell-tame}) and $k' =k/(9n_2)$,
using \cite[Lemma~4.2]{HA1}, we obtain
\[
N_{L_n/\Q}(\alpha)=\alpha \alpha' \alpha'' =\frac{1}{e^3c^6} R,
\]
where
\begin{align}
R & :=(a_0+a_1){k'}^2 n_1-a_0a_1^2 n_2(n_1^2 +3n_1n_2+9n_2^2)+a_0a_1 k' n_1^2 \label{eq:R-tame} \\
& \quad -(a_0^2-a_0a_1+a_1^2)k'n_2(n_1+3n_2)+{k'}^3 +(a_0+a_1)^3 n_2^3. \notag
\end{align}
Since $\alpha \in \calO_{L_n}$, we have $N_{L_n/\Q}(\alpha)=\alpha \alpha' \alpha'' \in \Z$.
Furthermore, since $3||c$, we have $3^6 |R$.
From (\ref{eq:a0-a1-tame}),  (\ref{eq:R-tame}), $3|n_1,\, 3|k',\, \Delta_n =n_1^2+3n_1n_2+9n_2^2 =de^2c^3$, and
$ec=a_0^2-a_0a_1 +a_1^2$, we have
\[
R  \equiv -a_0a_1^2 n_2 de^2c^3 +a_0a_1 k' n_1^2-ec k' n_2 (n_1+3n_2)+{k'}^3 +(a_0+a_1)^3 n_2^3 \equiv 0 \pmod{3^4}.
\]
Put $C=c/3,\, K=k'/3\, (\in \Z)$. Dividing the above congruence by $3^3$, we obtain
\begin{equation}
-a_0a_1^2 n_2 de^2 C^3 +a_0a_1 Kt^2-eCKn_2(t+n_2)+K^3+ \left( \frac{a_0+a_1}{3} \right)^3 n_2^3 \equiv 0 \pmod{3}.
\end{equation}
Since $d\equiv e\equiv C \equiv 1\pmod{3},\, t\equiv n_2 \pmod{3}$, and $a_1^2 \equiv 1 \pmod{3}$ from (\ref{eq:a0-a1-tame}),
this congruence yields
\begin{equation}\label{eq:mod3}
-a_0 n_2+a_0a_1 Kn_2^2 -2n_2^2 K +K^3+\left(\frac{a_0+a_1}{3} \right)^3 n_2^3 \equiv 0 \pmod{3}.
\end{equation}
On the other hand, we have $K=\varepsilon eC^2- (a_0+a_1)t/3$ from (\ref{eq:k'-tame}); hence, $(a_0+a_1)n_2/3 \equiv \varepsilon -K \pmod{3}$.
From this congruence and (\ref{eq:mod3}), we have
\[
-a_0 n_2+a_0a_1 Kn_2^2 -2n_2^2 K+\varepsilon \equiv 0 \pmod{3}.
\]
Furthermore, since $a_0a_1 \equiv -1 \pmod{3}$ from (\ref{eq:a0-a1-tame}), we obtain 
$\varepsilon \equiv n_2 a_0 \pmod{3}$, which implies $\varepsilon =\left( \frac{n_2 a_0}{3} \right)$.
We conclude from (\ref{eq:k-tame}) that $k/(9n_2)= \left( \left( \frac{n_2a_0}{3} \right) ec^2 -n_1 (a_0+a_1) \right) /3 \, (\in \Z)$.
From (\ref{eq:eta=g-tame})  and (\ref{eq:rho/ell-tame}), the assertion of the theorem in this case holds.
\end{proof}

For the special cases, the generator $\alpha$ of the ring of integers is given more simply, as described below.
First, we assume that $\Delta_n=n_1^2+3n_1n_2+9n_2^2$ is square-free. Then, we have $3\nmid n_1$ and $e=c=1$.
We obtain $(1,0)$ as a pair $(a_0,a_1)$ satisfying $ec=a_0^2-a_0a_1+a_1^2$ and $a_0+a_1\zeta |A_n$. From $\varepsilon =
\left(\frac{n_1(a_0+a_1)}{3} \right)=\left(\frac{n_1}{3} \right)$, the generator $\alpha$ of Thoerem~\ref{theo:tame} is given by $\alpha=
n_2\rho_n + \left( \left( \frac{n_1}{3} \right)-n_1 \right)/3$.
We obtain the following corollary.
\begin{cor}\label{cor:tame1}
Let $n=n_1/n_2$ be a rational number where the integers $n_1$ and $n_2$ are coprime.
Suppose that the cubic polynomial $f_n(X)$ is irreducible over $\Q$  and $\Delta_n$ is square-free.
Then $\alpha:= n_2\rho_n +\left( \left( \frac{n_1}{3} \right)-n_1 \right)/3$ is a generator of a normal integral basis of the cyclic cubic field $L_n$, 
namely, we have $\calO_{L_n}=\Z[G]\, \alpha =\calA_{L_n/\Q}\, \alpha$.
\end{cor}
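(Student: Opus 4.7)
The plan is to view Corollary~\ref{cor:tame1} as a direct specialization of Theorem~\ref{theo:tame}. The whole proof reduces to identifying which case of that theorem applies and exhibiting the simplest possible choice of the auxiliary integers $(a_0,a_1)$.

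First I would translate the square-freeness hypothesis into constraints on $d,e,c$. Since $\Delta_n=de^2c^3$ with $d,e$ square-free and $(d,e)=1$, square-freeness of $\Delta_n$ forces $e=c=1$ and $d=\Delta_n$. Moreover it forces $3\nmid n_1$: otherwise writing $n_1=3t$ gives $\Delta_n=9(t^2+tn_2+n_2^2)$, contradicting square-freeness. Hence case~(1) of (\ref{eq:f}) applies, $L_n/\Q$ is tamely ramified, and we are in the regime covered by Theorem~\ref{theo:tame}.

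With $ec=1$, the simplest integer pair satisfying $ec=a_0^2-a_0a_1+a_1^2$ together with the divisibility $a_0+a_1\zeta\mid A_n$ in $\Z[\zeta]$ is $(a_0,a_1)=(1,0)$: the quadratic form identity is trivial, and $1$ divides everything. Plugging $e=c=1$ and $(a_0,a_1)=(1,0)$ into Theorem~\ref{theo:tame}, the sign collapses to
\[
\varepsilon=\left(\frac{n_1(a_0+a_1)}{3}\right)=\left(\frac{n_1}{3}\right),
\]
which is well-defined precisely because $3\nmid n_1$, and the prescribed generator simplifies directly to the stated $\alpha=n_2\rho_n+\frac{1}{3}\!\left(\left(\frac{n_1}{3}\right)-n_1\right)$. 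The conclusion $\calO_{L_n}=\Z[G]\,\alpha=\calA_{L_n/\Q}\,\alpha$ is then inherited from the theorem.

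The argument is essentially bookkeeping; there is no genuine obstacle once Theorem~\ref{theo:tame} is available. The only subtlety worth isolating at the start is that square-freeness of $\Delta_n$ really does force $3\nmid n_1$, which is what simultaneously places us in case~(1) of (\ref{eq:f}) and ensures the Legendre symbol in the final answer is well-defined.
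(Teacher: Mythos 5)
Your proposal is correct and follows essentially the same route as the paper: square-freeness of $\Delta_n$ forces $e=c=1$ and $3\nmid n_1$ (placing us in case (1) of (\ref{eq:f})), the pair $(a_0,a_1)=(1,0)$ trivially satisfies the conditions of Theorem~\ref{theo:tame}, and substitution yields the stated $\alpha$. Your explicit justification that $3\mid n_1$ would give $9\mid\Delta_n$ is a detail the paper leaves implicit, but the argument is otherwise identical.
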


Next, we assume that $\Delta_n=n_1^2+3n_1n_2+9n_2^2=3^3d$ with a square-free integer $d$.  Then, we have $3| n_1$ and $e=1,\, c=3$.
Furthermore, we assume $n_1=3t\, (t\in \Z)$ and $n_2 \equiv -2t \pmod{9}$. 
We obtain $(1,-1)$ as a pair $(a_0,a_1)$ satisfying $ec=a_0^2-a_0a_1+a_1^2$ and $a_0+a_1\zeta |A_n$. From $\varepsilon =
\left(\frac{n_2a_0}{3} \right)=\left(\frac{n_2}{3} \right)$, the generator $\alpha$ of Thoerem~\ref{theo:tame} is given by $\alpha=
 \left( n_2 (\rho_n-\rho'_n)+3\left( \frac{n_1}{3} \right) \right)/9$.
We obtain the following corollary.
\begin{cor}\label{cor:tame2}
Let $n=n_1/n_2$ be a rational number where the integers $n_1$ and $n_2$ are coprime.
Suppose that the cubic polynomial $f_n(X)$ is irreducible over $\Q$  and $\Delta_n=3^3d$ with a square-free integer $d$,
and $n_1=3t\, (t\in \Z),\, n_2\equiv -2t \pmod{9}$.
Then, $\alpha:= \left( n_2 (\rho_n-\rho'_n)+3\left( \frac{n_1}{3} \right) \right)/9  $ is a generator of a normal integral basis of the cyclic cubic field $L_n$, 
namely, we have $\calO_{L_n}=\Z[G] \, \alpha=\calA_{L_n/\Q} \, \alpha$.
\end{cor}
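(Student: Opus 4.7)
The plan is to deduce the corollary as an immediate specialization of Theorem~\ref{theo:tame}. Under the given hypotheses we are in case~(2) of~(\ref{eq:f}), so Theorem~\ref{theo:tame} applies directly; what remains is to identify the integer parameters $e$ and $c$, exhibit an admissible pair $(a_0,a_1)$, compute the sign $\varepsilon$, and substitute into the theorem's formula.

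From $\Delta_n=3^3d$ with $d$ square-free, combined with the decomposition $\Delta_n=de^2c^3$ (where $d,e$ are square-free and coprime) and Lemma~\ref{lem:m}(2) (which gives $3\nmid e$ and $3\,\|\,c$), I read off $e=1$ and $c=3$, so $ec=3$ and $ec^2=9$. I then take $(a_0,a_1)=(1,-1)$: clearly $a_0^2-a_0a_1+a_1^2=3=ec$, and $a_0+a_1\zeta=1-\zeta$ divides $A_n=n_1+3n_2(1+\zeta)$ in $\Z[\zeta]$, because $3\mid n_1=3t$ forces $3\mid A_n$ and $(1-\zeta)^2=-3\zeta$ is an associate of $3$. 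The associated sign is then
\[
\varepsilon=\left(\frac{n_2a_0}{3}\right)=\left(\frac{n_2}{3}\right).
\]

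Substituting into the formula of Theorem~\ref{theo:tame}: since $a_0+a_1=0$ kills the term $n_1(a_0+a_1)$, and $ec^2=9$, the generator collapses to
\[
\alpha \;=\; \frac{1}{9}\bigl(n_2(\rho_n-\rho_n')+3\varepsilon\bigr),
\]
which is the stated formula. Because Theorem~\ref{theo:tame} already supplies $\calO_{L_n}=\Z[G]\,\alpha=\calA_{L_n/\Q}\,\alpha$, there is nothing further to prove. The only point requiring any verification is the divisibility $(1-\zeta)\mid A_n$ in $\Z[\zeta]$, which, as noted, is immediate from $3\mid n_1$; this is the step one might expect to be delicate in other settings, but here the hypothesis $n_1=3t$ makes it trivial.
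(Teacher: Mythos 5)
Your proof is correct and follows essentially the same route as the paper's own derivation of Corollary~\ref{cor:tame2}: read off $e=1$, $c=3$ from $\Delta_n=3^3d$ and Lemma~\ref{lem:m}(2), take $(a_0,a_1)=(1,-1)$ (checking $(1-\zeta)\mid A_n$ via $3\mid n_1$), compute $\varepsilon=\left(\tfrac{n_2a_0}{3}\right)=\left(\tfrac{n_2}{3}\right)$, and specialize the formula of Theorem~\ref{theo:tame}. One remark: your computation yields the constant term $3\left(\tfrac{n_2}{3}\right)$, which agrees with the numerical tables (e.g.\ $(n_1,n_2)=(3,7)$ gives $+\tfrac13$ and $(-3,11)$ gives $-\tfrac13$), whereas the corollary as printed writes $3\left(\tfrac{n_1}{3}\right)$, which would vanish since $3\mid n_1$; this is evidently a typo in the statement, and your version is the correct one.
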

\section{Wildly ramified cases}\label{sec:wild}

Let $n_1/n_2$ be a rational number where the integers $n_1$ and $n_2$ are coprime.
Suppose that the cubic polynomial $f_n(X)$ is irreducible over $\Q$ and consider the case (3)
of (\ref{eq:f}). 
Let ${\bf 1}$ be the trivial character of $G$ and $\chi$ a character given by $\chi : G \to \C^{\times}, \ \sigma \mapsto \zeta_3$.
In this case, $L_n/\Q$ is wildly ramified since $3^2$ divides the conductor of $L_n$. The corresponding group of
Dirichlet character is
\[
\frakX =\begin{cases}
\Phi_{9de} \coprod \Phi_{3de}  & \text{ if $t\equiv n_2 \!\! \pmod{3}$}, \\
\Phi_{3de} \coprod \Phi_{de}  & \text{ if $t\not\equiv n_2 \!\! \pmod{3}$}, 
\end{cases}
\]
where $\Phi_{9de}= \{ \chi, \chi^2 \}$ and $\Phi_{3de}= \{ {\bf 1} \}$ if $t\equiv n_2 \pmod{3}$, and $
\Phi_{3de} = \{ \chi, \chi^2 \}$ and $\Phi_{de}= \{ {\bf 1} \}$ if $t\not\equiv n_2 \pmod{3}$.
Since 
\begin{align}
{\bf e}_{\mathfrak f_{L_n}} & = {\bf e}_{\chi}+{\bf e}_{\chi^2} =\dfrac{1}{3} (2-\sigma-\sigma^2) =
\begin{cases}
{\bf e}_{9de}  & \text{ if $t\equiv n_2 \!\! \pmod{3}$}, \\
{\bf e}_{3de}  & \text{ if $t\not\equiv n_2 \!\! \pmod{3}$}, 
\end{cases} \label{eq:e} \\
{\bf e}_{\bf 1} &=\dfrac{1}{3}  (1+\sigma +\sigma^2)  =
\begin{cases}
{\bf e}_{3de}  & \text{ if $t\equiv n_2 \!\! \pmod{3}$}, \\
{\bf e}_{de}  & \text{ if $t\not\equiv n_2 \!\! \pmod{3}$}, 
\end{cases} \notag
\end{align}
we have $\calA_{L_n/\Q}=\Z [G] [{\bf e}_{\mathfrak f_{L_n}}, {\bf e}_{\bf 1} ]$ and
\begin{equation}\label{eq:int-ring-wild}
\calO_{L_n} = \begin{cases}
\Z [G] \, \eta_{9de} \oplus \Z & \text{ if $t\equiv n_2 \!\! \pmod{3}$}, \\
\Z [G] \,  \eta_{3de} \oplus \Z  & \text{ if $t\not\equiv n_2 \!\! \pmod{3}$}.
\end{cases}
\end{equation}
Put $\rho = \rho_n$, $\zeta = \zeta_3$ and  $\Delta_n =n_1^2+3n_1n_2+9n_2^2=A_n A_n'$ with
$A_n:=n_1+3n_2(1+\zeta),\, A_n':=n_1+3n_2(1+\zeta^2)$.
In this case, the generators of the $\Z [G]$-module $\calO_{L_n}$ are obtained by the
roots of $f_n(X)$ as follows.

\begin{theo}\label{theo:wild}
Let $n=n_1/n_2$ be a rational number where the integers $n_1$ and $n_2$ are coprime.
Suppose that the cubic polynomial $f_n(X)$ is irreducible over $\Q$, and $n_1=3t\, (t\in \Z)$, $n_2 \not\equiv -2t \pmod{9}$. There exist integers $a_0$ and $a_1 $  that satisfy
\[ec=
\begin{cases}
a_0^2-a_0a_1+a_1^2 &  \text{ if $t\equiv n_2 \!\! \pmod{3}$}, \\
3(a_0^2-a_0a_1+a_1^2)  &   \text{ if $t\not\equiv n_2 \!\! \pmod{3}$},
\end{cases}
\]
and
$a_0+a_1 \zeta $ divides $A_n$ in $\Z [\zeta]$. Furthermore, for any such $a_0,\, a_1$ and 
\[
\alpha :=\frac{1}{ec^2}(3n_2 (a_0\rho_n +a_1 \rho'_n)-n_1(a_0+a_1)),
\]
we have  $\alpha \in {\bf e}_{\mathfrak f_{L_n}}\calO_{L_n}$ and 
\[
\calO_{L_n}=\Z [G] \,  \alpha \oplus \Z =\calA_{L_n/\Q}  (\alpha+1),
\]
where ${\bf e}_{\mathfrak f_{L_n}}=(2-\sigma-\sigma^2)/3$.
\end{theo}
\begin{rem}\label{rem:wild}
The result in the case of the simplest cubic fields (namely, when $n_2=\pm 1$) has been obtained
in \cite{OA}.
\end{rem}
\begin{proof} 
When $n=0$ ($n_1=0$ and $n_2=\pm 1$), it is the case of $t\not\equiv n_2 \pmod{3}$.
In this case, we have $f_n(X)=X^3-3X-1$ (irreducible over $\Q$), $\Delta_n=3^2,\, d=1,\, e=3,\, c=1$, and
$D_{L_n}=3^4$.
A pair of integers $a_0$ and $a_1$ satisfying $3(a_0^2-a_0a_1+a_1^2)=ec$ and $a_0+a_1 \zeta | A_n$ in $\Z [\zeta]$
is $a_0=1, a_1=0$. Then, we have $\alpha =\pm \rho$.
In fact, since $\{ 1,\rho, \rho' \}$ is an integral basis, the assertion $\calO_{L_n} =\Z [G]  \rho \oplus \Z$
of the theorem holds.
Hereafter, we assume $n\ne 0$. Let $\{ 1, \phi, \psi \}$ be the integral basis of $L_n$ given by Proposition~\ref{prop:IB}.
We find $\alpha$ satisfying $\calO_{L_n} =\Z [G] \alpha \oplus \Z$ using the method proposed by Acciaro and 
Fieker \cite{AF}. From $\rho +\rho' +\rho'' =n$ and $\rho^2=\rho'+(n+1) \rho +2$, we obtain the following:
\begin{align*}
1 & =\dfrac{n_2}{n_1}(\rho+ \rho' +\rho'' ),\\
\phi & =\dfrac{n_2}{m} (3\rho- (\rho+\rho'+\rho'')) ,\\
\psi & =\dfrac{1}{3ec^2n_1} (9 (n_1n_2 \rho' +n_1 (n_1+n_2)\rho +2n_2^2(\rho+\rho'+\rho'' ))  \\
&  \quad +3n_1 (mr -2n_1)\rho +(m^2r^2+m^2 a-mn_1 r+n_1^2)(\rho+\rho' +\rho'' )),
\end{align*}
where
\[
m = 
\begin{cases}
c &  \text{ if $t\equiv n_2 \!\! \pmod{3}$}, \\
3c &   \text{ if $t\not\equiv n_2 \!\! \pmod{3}$}.
\end{cases}
\]
Put $\ell =3ec^2$. We have
\[
{\bf e}_{\mathfrak f_{L_n}}  1=g_1 \, \frac{\rho}{\ell}, \quad {\bf e}_{\mathfrak f_{L_n}}  \phi =g_2\,  \frac{\rho}{\ell}, \quad {\bf e}_{\mathfrak f_{L_n}}  \psi =g_3\,  \frac{\rho}{\ell},
\]
 where $g_1,g_2,g_3 \in \Z [G]$ are given by
\begin{align}
g_1 & =0 ,\notag\\
g_2 & =\dfrac{\ell n_2} {m} (2-\sigma -\sigma^2), \label{eq:g-wild} \\
g_3 & =\dfrac{\ell}{3ec^2} ((2n_1+3n_2+2mr)-(n_1-3n_2+mr)\sigma -(n_1+6n_2+mr) \sigma^2). \notag
\end{align}
Since $\{ {\bf 1}, \phi, \psi \}$ is an integral basis, we obtain from (\ref{eq:g-wild}) that
\begin{equation}\label{eq:O-wild}
{\bf e}_{\mathfrak f_{L_n}}  \calO_{L_n}=({\bf e}_{\mathfrak f_{L_n}}  \phi )\Z +({\bf e}_{\mathfrak f_{L_n}}  \psi ) \Z =(g_2 \Z [G] +g_3 \Z [G] )
\frac{\rho}{\ell} .
\end{equation}
Since
\begin{equation}\label{eq:em}
{\bf e}_m \eta_{m'} = \begin{cases}
0 & \it{if}\ m\ne m', \\
\eta_m &  \it{if} \ m=m',\\
\end{cases}
\end{equation}
for any $m,m' \in \calD (\mathfrak f_{L_n})$ (see \cite[p.165, (2) and p.167, lines 1,2]{Le}), we have
from (\ref{eq:e}) and (\ref{eq:int-ring-wild})
\begin{equation}\label{eq:eO-wild}
{\bf e}_{\mathfrak f_{L_n}} \calO_{L_n} =\begin{cases}
\Z [G]  \eta_{9de}  & \text{ if $t\equiv n_2 \!\! \pmod{3}$}, \\
\Z [G]  \eta_{3de} & \text{ if $t\not\equiv n_2 \!\! \pmod{3}$}.
\end{cases}
\end{equation}
From (\ref{eq:O-wild}) and (\ref{eq:eO-wild}), there exists $g\in \Z [G]$ satisfying
\begin{equation}\label{eq:eta-wild}
g \, \frac{\rho}{\ell} = \begin{cases}
\eta_{9de}  & \text{ if $t\equiv n_2 \!\! \pmod{3}$}, \\
\eta_{3de} & \text{ if $t\not\equiv n_2 \!\! \pmod{3}$}.
\end{cases}
\end{equation}
Therefore, we have
\begin{equation}\label{eq:eta=g-wild}
{\bf e}_{\mathfrak f_{L_n}}  \calO_{L_n} =
g\Z [G]  \frac{\rho}{\ell} 
= \begin{cases}
\Z [G]  \eta_{9de}  & \text{ if $t\equiv n_2 \!\! \pmod{3}$}, \\
\Z [G]  \eta_{3de} & \text{ if $t\not\equiv n_2 \!\! \pmod{3}$}.
\end{cases}
\end{equation}
From (\ref{eq:O-wild}) ad (\ref{eq:eta=g-wild}), we obtain the equality of ideals of $\Z [G]$:
\[
(g) +\mathrm{Ann}_{\Z [G] }\left( \frac{\rho}{\ell} \right) =(g_2,g_3) +\mathrm{Ann}_{\Z [G] }\left( 
\frac{\rho}{\ell} \right).
\]
It follows from $n\ne 0$ that $\{ \rho/\ell, \rho'/\ell, \rho''/\ell \}$ is a normal
basis (\cite[Proof of Theorem~4.4]{HA1}).
Therefore, we have $\mathrm{Ann}_{\Z [G] }(\rho/\ell)=0$, and obtain the equality of idelas of $\Z [G]$:
\begin{equation}\label{eq:g12-wild}
(g)=(g_2,g_3).
\end{equation}
Consider the surjective ring homomorphism
\[
\nu: \Z [G] \longrightarrow \Z [\zeta],
\]
defined by $\nu (\sigma )=\zeta$. We calculate the image of the ideal $I:=(g)=(g_2,g_3)$ by $\nu$.
 Since $\nu$ is surjective, we obtain the ideal of $\Z [\zeta]$:
\begin{equation}\label{eq:nu23^wild}
\nu (I) =( \nu (g)) =(\nu (g_2), \nu(g_3)).
\end{equation}
 From (\ref{eq:g-wild}), the elements $\nu (g_2)$ and $\nu (g_3)$ are given by
 \begin{align*}
 \nu (g_2) & =\frac{\ell n_2}{m} (2-\zeta-\zeta^2)=\frac{9ec^2n_2}{m} \\
 \nu (g_3) & = \frac{\ell}{3ec^2} ( (2n_1+3n_2+2mr)-(n_1-3n_2+mr) \zeta -(n_1+6n_2+mr) \zeta^2 ) \\
 & =3(A_n+mr),
 \end{align*}
 where $A_n =n_1+3n_2(1+\zeta)$. Therefore, we obtain
 \begin{equation}\label{eq:nuI-wild}
 \nu (I)= (\nu (g))=3 \left( \frac{3ec^2n_2}{m}, A_n+mr \right).
 \end{equation}
 Let $p_2, \ldots ,p_k$ be different prime numbers satisfying
 \begin{equation}\label{eq:p1k-wild}
p_1^{\iota_1} \cdots p_k^{\iota_k} =\frac{ec}{3},
 \end{equation}
 with $\iota_1,\ldots, \iota_k \in \Z_{>0}$. For 
 $i\in  \{1,\ldots, k \}$, we put $p_i =\pi_i \pi_i'$, where $\pi_i$ and $\pi_i'$ are prime elements
 of $\Z [ \zeta]$ satisfying $\pi_i |A_n$ and $\pi' |A_n'$.
 Let $a_0$ and $a_1$ be integers satisfying
 \begin{equation}\label{eq:a0a1-wild}
a_0+a_1\zeta =  \begin{cases}
 (1-\zeta) \pi_1^{\iota_1} \cdots \pi_k^{\iota_k}  & \text{ if $t\equiv n_2 \!\! \pmod{3}$}, \\
\pi_1^{\iota_1} \cdots \pi_k^{\iota_k} & \text{ if $t\not\equiv n_2 \!\! \pmod{3}$}.
\end{cases}
 \end{equation}
 Then $a_0$ and $a_1$ satisfy
 \begin{align}
& ec = \begin{cases}
a_0^2-a_0a_1+a_1^2 =(a_0+a_1 \zeta)(a_0+a_1\zeta^2)  & \text{ if $t\equiv n_2 \!\! \pmod{3}$}, \\
3(a_0^2-a_0a_1+a_1^2) =3(a_0+a_1\zeta)(a_0 +a_1\zeta^2) & \text{ if $t\not\equiv n_2 \!\! \pmod{3}$}, 
 \end{cases}  \label{eq:ec} \\
& a_0+a_1 \zeta \, | \, A_n. \notag
\end{align}
Conversely, any integers $a_0$ and $a_1$ satisfying (\ref{eq:ec}), $a_0+a_1\zeta$ is given by the right-hand side of (\ref{eq:a0a1-wild}) up to a unit of
$\Z [\zeta]$.

 Next, we shall prove
 \begin{equation}\label{eq:nuI-gen-wild}
 \nu (I)=9n_2 (a_0 +a_1 \zeta).
 \end{equation}
 Put
 \begin{equation}\label{eq:delta-wild}
 \left( \frac{3ec^2n_2}{m}, A_n+mr \right)=(\delta), \quad \delta \in \Z [\zeta]
 \end{equation}
 and consider the $\mathfrak p$-adic valuation of $\delta$ for prime ideals $\mathfrak p$
 of $\Z [\zeta]$. Obviously we have $v_{\mathfrak p}(\delta)=0$ if $v_{\mathfrak p} (3ecn_2)=0$.
 
 First, we consider the $2$-adic valuation of $\delta$. Let $\mathfrak p=(2)$ be the prime idela of 
 $\Z [\zeta]$ above $2$. We shall show 
 \begin{equation}\label{eq:delta2-wild}
 v_{\mathfrak p}(\delta )=v_2(n_2).
 \end{equation}
 We have $2\nmid ec$ from Lemma~\ref{lem:equiv-0,1}, and hence $v_2 (3ec^2n_2/m)=v_2(n_2)$.
 Therefore, (\ref{eq:delta2-wild}) follows from
 \begin{equation}\label{eq:A2-wild}
 A_n +mr \equiv 0 \pmod{ (2^{v_2(n_2)})}.
 \end{equation}
 We shall show this congruence. If $2\nmid n_2$, then (\ref{eq:A2-wild}) holds.
 We consider the case $2||n_2$.
 From Lemmas ~\ref{lem:equiv-0,1}, \ref{lem:lambda-mu}, and (\ref{eq:a,b}), we have $2\nmid a,\, 2|b$ and
 $2\nmid Q$.
 Furthermore, we have $2\nmid m$ from Lemmas~\ref{lem:equiv-0,1} and \ref{lem:m}. According to Proposition~\ref{prop:IB}, we have
 $r\equiv 3Q^2a \equiv 1 \pmod{2}$. Combining all these, we have
 $A_n+mr \equiv 0 \pmod{(2)}$. Hence, (\ref{eq:A2-wild}) holds in the case of
 $2|| n_2$. Next, we consider the case of $4|n_2$.
 In both cases (3) (i) and (3) (ii) of (\ref{eq:f}), according to Proposition~\ref{prop:IB} and Lemma~\ref{lem:lambda-mu},
 we have $r\equiv (2^{\lambda -1}-3\cdot b/2)a^{\psi (2^{\lambda})} \pmod{2^{\lambda}}$ and $\lambda =v_2(n_2)$.
 In the case of (\ref{eq:f}), (3) (i), we have $m=c$, and from (\ref{eq:a,b}), we have
 \begin{align*}
 a(A_n+mr) & \equiv a(n_1 +3n_2 (1+\zeta)) +ac (2^{\lambda -1}-3\cdot b/2 )a^{\psi (2^{\lambda})} \\
 & \equiv a n_1 +c (2^{\lambda -1} -3\cdot b/2) \\
 & \equiv -3de^2 c n_1+c \left( 2^{\lambda-1 }+3de^2 \cdot \dfrac{2n_1+3n_2}{2} \right) \\
 & \equiv 2^{\lambda -1} c \left( 1+9de^2 \dfrac{n_2}{2^{\lambda} }\right) \\
 & \equiv 0 \pmod{ (2^{\lambda})} .
 \end{align*}
 Since $2\nmid a$, (\ref{eq:A2-wild}) holds in the case of $4|n_2$ and (\ref{eq:f}), (3) (i).
 In the case of $4| n_2$ and (\ref{eq:f}), (3) (ii), we have $m=3c$, and from (\ref{eq:a,b}), we have
 \begin{align*}
 a (A_n +mr) & \equiv a(n_1+3n_2 (a+\zeta)) +3ac (2^{\lambda -1} -3\cdot b/2) a^{\psi (2^{\lambda})} \\
 & \equiv an_1 +3c (2^{\lambda -1} -3\cdot b/2 ) \\
 & \equiv -d\dfrac{e^2}{3} c n_1 +3c \left (2^{\lambda -1} +d \left( \dfrac{e}{3} \right)^2 \dfrac{2n_1+3n_2}{2} \right) \\
 & \equiv 2^{\lambda -1}  c \left( 3+ de^2 \dfrac{n_2}{2^{\lambda} } \right) \\
 & \equiv 0 \pmod{ (2^{\lambda})}.
 \end{align*}
 Since  $2\nmid a$, (\ref{eq:A2-wild}) holds in the case of $4|n_2$ and (\ref{eq:f}), (3) (ii). Hence, we obtain (\ref{eq:delta2-wild})
 on the $2$-adic valuation of $\delta$.

 Next, we consider the $3$-adic valuation of $\delta$. Let $\mathfrak p =(1-\zeta)$ be the prime ideal of $\Z [\zeta]$ above $3$.
 We shall show
 \begin{equation}\label{eq:delta3-wild}
 v_{\mathfrak p} (\delta) =\begin{cases}
 3 & \text{ if $t\equiv n_2 \!\! \pmod{3}$}, \\
2 & \text{ if $t\not\equiv n_2 \!\! \pmod{3}$},
 \end{cases}
 \end{equation}
 Note that $3\nmid n_2$ since $3|n_1$. From Lemma~\ref{lem:m}, we have
 \begin{equation}\label{eq:val3-wild}
 v_{\mathfrak p} \left( \frac{3ec^2n_2}{m} \right) =2v_3 \left( \frac{3ec^2n_2}{m} \right)=
 \begin{cases}
 4 & \text{ if $t\equiv n_2 \!\! \pmod{3}$}, \\
2 & \text{ if $t\not\equiv n_2 \!\! \pmod{3}$}.
 \end{cases}
 \end{equation}
 In the case of $t\equiv n_2 \pmod{3}$, according to Proposition~\ref{prop:IB}, Lemma~\ref{lem:m} and 
 (\ref{eq:a,b}), we have $3^2 |a,\, 3^2 |b$ and hence $3^2 |r$. Since $3|m=c$, we obtain $3^3 |mr$.
 On the other hand, we have $v_{\mathfrak p} (A_n)=3$ since
 \begin{align*}
 \dfrac{1}{3}  A_n & =t+n_2 (1+\zeta) \equiv 3t \equiv 0 \pmod{\mathfrak p} ,\\
 \dfrac{1}{3} A_n & = (t+n_2)+n_2 \zeta \not\equiv 0 \pmod{ (3)}.
 \end{align*}
 Therefore, we have $v_{\mathfrak p} (A_n+mr)=3$. From (\ref{eq:val3-wild}), we obtain (\ref{eq:delta3-wild}).
 In the case of $t\not\equiv n_2 \pmod{3}$, according to Lemmas~\ref{lem:equiv-0,1}, 
 \ref{lem:m},
 \ref{lem:lambda-mu}, Proposition~\ref{prop:IB},
 and (\ref{eq:a,b}), we have 
 \[
 r\equiv -bQ^2 \equiv -b \equiv d \left( \frac{e}{3} \right)^2 (2t+n_2) \equiv 2t +n_2 \pmod{3}.
 \]
 Therefore, we have
 \[
 \frac{1}{3} (A_n +mr) =t +n_2 (1+\zeta) +cr \equiv 3(t+n_2) \equiv 0 \pmod{\mathfrak p},
 \]
 and hence $v_{\mathfrak p} (A_n+mr) \geq 3$. From (\ref{eq:val3-wild}), we obtain 
 (\ref{eq:delta3-wild}).
 
 Next, we consider the $p$-adic valuation of $\delta$ for a prime number $p\, (\ne 2,3)$ dividing $n_2$.
 Since $(n_2,ec)=1$, we have $p\nmid ec$, and hence $v_p \left(\frac{3ec^2n_2}{m} \right)=v_p (n_2)$.
 Let $\mathfrak p$ be a prime ideal of $\Z [\zeta]$ above $p$.
 We shall show
 \begin{equation}\label{eq:deltan2-wild}
 v_{\mathfrak p} (\delta )=v_{\mathfrak p} (n_2).
 \end{equation}
 The equality follows from
 \begin{equation}\label{eq:An2-1-wild}
 A_n+mr \equiv 0 \pmod{ (p^{v_p (n_2)})} .
 \end{equation}
 We shall show the congruence. According to Proposition~\ref{prop:IB} and $p^{v_p (n_2)} | Q=|n_2 |/
 (2^{v_2(n_2)} 3^{v_3(n_2)} )$, we have
 $r\equiv -3b (2a)^{\psi (Q) } \pmod{p^{v_p (n_2)} }$, and hence
 \begin{equation}\label{eq:An2-2-wild}
 A_n+mr \equiv n_1-3mb (2a)^{\psi (Q)} \pmod{ (p^{v_p (n_2)} )}.
 \end{equation}
 From (\ref{eq:a,b}) and (\ref{eq:An2-2-wild}), we obtain (\ref{eq:An2-1-wild}).
 Hence, we obtain (\ref{eq:deltan2-wild}) on the $p$-adic valuation of $\delta$ for a prime number $p\, (\ne 2,3)$ dividing $n_2$.

 Finally, we consider the $p$-adic valuation of $\delta$ for a prime number $p\, (\ne 2,3)$ dividing $ec$.
 Since $(n_2, ec)=1$, we have $p\nmid n_2$ and hence $v_p \left( \frac{3ec^2n_2}{m} \right)=v_p (ec)$.
 Since $p\equiv 1 \pmod{3}$ from Lemma~\ref{lem:equiv-0,1}, we put $p=\pi \pi'$ where $\pi$ and $\pi'$ are
 prime elements of $\Z [\zeta]$ satisfying $\pi |A_n$ and $\pi' |A_n'$. Since $ec |\Delta_n =A_nA_n'$ and
 Lemma~\ref{lem:1-zeta}, we have $\pi^{v_p (ec)} |A_n$ and $\pi' \nmid A_n$.
 According to (\ref{eq:a,b}), we have $e|a$ and $e/3^{v_3(e)} | b$, and hence we have
 $e/3^{v_3 (e)} |r$ from Proposition~\ref{prop:IB}.
 We obtain $mr \equiv 0 \pmod{ec}$.
 Put $\mathfrak p: =(\pi )$ and $\mathfrak p':=(\pi')$.
 We have $A_n+mr \equiv 0 \pmod{\mathfrak p^{v_p(ec)}}$ and conclude
 \begin{equation}\label{eq:deltaec-wild}
 v_{\mathfrak p} (\delta) =v_p (ec), \quad v_{\mathfrak p'} (\delta)=0.
 \end{equation}

 Combining (\ref{eq:nuI-wild}),  (\ref{eq:p1k-wild}),  (\ref{eq:a0a1-wild}), (\ref{eq:ec}), (\ref{eq:delta-wild}),  
 (\ref{eq:delta2-wild}), (\ref{eq:delta3-wild}), (\ref{eq:deltan2-wild}) 
 and (\ref{eq:deltaec-wild}), we obtain (\ref{eq:nuI-gen-wild}).

 Put $x=9n_2 (a_0+a_1 \sigma )\, (\in \Z [G])$.
 Similarly to the proof of Theorem~\ref{theo:tame}, there exist $\xi \in \Z [G]^{\times}$ and $k\in \Z $ that satisfy
\begin{align}
(\xi g)  \dfrac{\rho}{\ell} & =x  \dfrac{\rho}{\ell} +k \Tr_{L_n/\Q} \left(\dfrac{\rho}{\ell} \right) \label{eq:rho/ell-wild} \\
 & =\dfrac{1}{3ec^2n_2} (9n_2^2 (a_0\rho +a_1 \rho' ) +kn_1). \notag
\end{align}
Since $9de$ (resp. $3de$) is not square-free in the case of $t\equiv n_2 \pmod{3}$  (resp. $t\not\equiv n_2 \pmod{3}$),
we obtain from (\ref{eq:eta-wild})
\begin{align*}
\Tr_{L_n/\Q} \left( (\xi g)  \dfrac{\rho}{\ell} \right) & =\xi  \Tr_{L_n/\Q}\left(g \dfrac{\rho}{\ell} \right) \\
& =\begin{cases}
 \xi  \Tr_{L_n/\Q} (\eta_{9de} )=\xi  \Tr_{\Q(\zeta_{9de})/\Q} (\zeta_{9de})=\xi  \mu (9de) & \text{ if $t\equiv n_2 \!\! \pmod{3}$}, \\
 \xi  \Tr_{L_n/\Q} (\eta_{3de} )=\xi  \Tr_{\Q(\zeta_{3de})/\Q} (\zeta_{3de})=\xi  \mu (3de) & \text{ if $t\not\equiv n_2 \!\! \pmod{3}$},
 \end{cases}\\
 & =0,
\end{align*}
where $\mu$ is the  M\"{o}bius function. Taking the trace on both sides of (\ref{eq:rho/ell-wild}) and multiplying them by $ec^2n_2/n_1$,
it follows that $k=-3n_2 (a_0+a_1)$.
Put $\alpha := (\xi g) (\rho/\ell)$.  we have $\alpha \in {\bf e}_{\mathfrak f_{L_n}} \calO_{L_n}$ from (\ref{eq:eta=g-wild}). From $\calO_{L_n} ={\bf e}_{\mathfrak f_{L_n}}  \calO_{L_n} \oplus \Z$, (\ref{eq:eta=g-wild}) and (\ref{eq:rho/ell-wild}),
the assertion of the theorem holds.
\end{proof}
For the special cases, the generator $\alpha$ of the ring of integers is given more simply, as described below.
First, we assume that $\Delta_n=n_1^2+3n_1n_2+9n_2^2=3^3d$ with a square-free integer $d$. Then, we have $3| n_1$ and $e=1,\, c=3$.
Furthermore, we assume $n_1=3t\, (t\in \Z)$ and $n_2 \not\equiv -2t \pmod{9}$.
In this case, we have $t\equiv n_2 \pmod{3}$ from Lemma~\ref{lem:m}.
We obtain $(1,-1)$ as a pair $(a_0,a_1)$ satisfying $ec=a_0^2-a_0a_1+a_1^2$ and $a_0+a_1\zeta |A_n$. 
The generator $\alpha$ of Thoerem~\ref{theo:wild} is given by $\alpha=
n_2(\rho_n-\rho_n')/3$.
We obtain the following corollary.
\begin{cor}\label{cor:wild1}
Let $n=n_1/n_2$ be a rational number where the integers $n_1$ and $n_2$ are coprime.
Suppose that the cubic polynomial $f_n(X)$ is irreducible over $\Q$  and $\Delta_n=3^3d$ with a square-free integer $d$,
and $n_1=3t \, (t\in \Z),\, n_2\not\equiv -2t \pmod{9}$. For
$\alpha:= n_2(\rho_n-\rho_n')/3$, we have $\alpha \in {\bf e}_{\mathfrak f_{L_n}} \calO_{L_n}$ and $\calO_{L_n} =\Z [G] \, \alpha \oplus \,  \Z =\calA_{L_n/\Q} (\alpha +1)$.
\end{cor}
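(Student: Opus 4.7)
The plan is to deduce the corollary as an immediate specialization of Theorem~\ref{theo:wild}: I need only pin down $e$ and $c$ and exhibit a convenient pair $(a_0,a_1)$, after which the formula for $\alpha$ collapses to the stated expression.

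First I would determine the subcase of Lemma~\ref{lem:m}(3) we lie in. Under $n_1=3t$ and $n_2\not\equiv -2t\pmod 9$, that lemma gives $v_3(\Delta_n)=3$ if $t\equiv n_2\pmod 3$ and $v_3(\Delta_n)=2$ otherwise, while the hypothesis $\Delta_n=3^3 d$ with $d\in\Z$ forces $v_3(\Delta_n)\geq 3$. Hence $t\equiv n_2\pmod 3$, and in fact $v_3(\Delta_n)=3$ so $3\nmid d$. Writing the canonical factorization as $\Delta_n=d'e^2c^3$ (temporarily using $d'$ to avoid a clash with the hypothesized $d$), Lemma~\ref{lem:m}(3)(i) gives $3\nmid d'$, $3\nmid e$, $3\|c$; setting $c=3c'$ with $3\nmid c'$ and comparing with $\Delta_n=27d$ yields $d'e^2(c')^3=d$, which by square-freeness of $d$ forces $e=c'=1$ and $d'=d$. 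Thus $e=1$ and $c=3$.

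With $(e,c)=(1,3)$ and $t\equiv n_2\pmod 3$, Theorem~\ref{theo:wild} requires integers $a_0,a_1$ with $a_0^2-a_0a_1+a_1^2=ec=3$ and $(a_0+a_1\zeta)\mid A_n$ in $\Z[\zeta]$. I would take $(a_0,a_1)=(1,-1)$, so that $a_0+a_1\zeta=1-\zeta$. The norm relation $1-(-1)+1=3$ is immediate, and the divisibility holds because $(1-\zeta)^2$ is associate to $3$ in $\Z[\zeta]$, so $(1-\zeta)\mid 3$ divides both $n_1=3t$ and $3n_2(1+\zeta)$, hence divides $A_n=n_1+3n_2(1+\zeta)$.

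Finally, substituting $(e,c,a_0,a_1)=(1,3,1,-1)$ into
\[
\alpha=\frac{1}{ec^2}\bigl(3n_2(a_0\rho_n+a_1\rho_n')-n_1(a_0+a_1)\bigr)
\]
from Theorem~\ref{theo:wild} gives $\alpha=n_2(\rho_n-\rho_n')/3$, since $a_0+a_1=0$ kills the term involving $n_1$; this matches the formula in the statement, and the conclusion $\calO_{L_n}=\Z[G]\,\alpha\oplus\Z=\calA_{L_n/\Q}(\alpha+1)$ is then immediate. The only real work is the initial $3$-adic valuation bookkeeping that rules out subcase (ii) and extracts $(e,c)=(1,3)$; everything thereafter is a mechanical specialization of Theorem~\ref{theo:wild}.
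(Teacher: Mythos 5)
Your proposal is correct and follows essentially the same route as the paper: the paper likewise reads off $e=1$, $c=3$ and $t\equiv n_2\pmod 3$ from Lemma~\ref{lem:m}, picks $(a_0,a_1)=(1,-1)$, and specializes the formula of Theorem~\ref{theo:wild}. Your $3$-adic bookkeeping that rules out subcase (ii) and pins down $(e,c)$ is in fact spelled out more carefully than in the paper's brief derivation.
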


Next, we assume that $\Delta_n=n_1^2+3n_1n_2+9n_2^2=3^2d$ with a square-free integer $d$ and $3\nmid d$. Then, we have $3| n_1$ and $e=3,\, c=1$.
In this case, we have $n_1=3t\, (t\in \Z)$ and $t\not\equiv n_2 \pmod{3}$ from Lemma~\ref{lem:m}.
We obtain $(1,0)$ as a pair $(a_0,a_1)$ satisfying $ec=3(a_0^2-a_0a_1+a_1^2)$ and $a_0+a_1\zeta |A_n$. 
The generator $\alpha$ of Thoerem~\ref{theo:wild} is given by $\alpha=
(3n_2 \rho_n-n_1)/3$.
We obtain the following corollary.
\begin{cor}\label{cor:wild2}
Let $n=n_1/n_2$ be a rational number where the integers $n_1$ and $n_2$ are coprime.
Suppose that the cubic polynomial $f_n(X)$ is irreducible over $\Q$  and $\Delta_n=3^2d$ with a square-free integer $d$ and $3\nmid d$.
 For
$\alpha:=  (3n_2 \rho_n-n_1)/3     $, we have $\alpha \in {\bf e}_{\mathfrak f_{L_n}} \calO_{L_n}$ and $\calO_{L_n} =\Z [G] \, \alpha \oplus \Z =\calA_{L_n/\Q} (\alpha +1)$.
\end{cor}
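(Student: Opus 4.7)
The plan is to check that the hypotheses of Theorem~\ref{theo:wild} are satisfied and then specialize its formula to the simplest admissible choice of $(a_0,a_1)$. So the first step is to translate the hypothesis $\Delta_n=3^2d$ with $d$ square-free and $3\nmid d$ into the notation $\Delta_n=de^2c^3$ used throughout the paper. Since $d$ is square-free and $(d,e)=1$, comparing $3$-adic valuations forces $e^2c^3=9$ with $c\in\Z_{>0}$, hence $c=1$ and $e=3$. In particular $3\mid e$ and $3\nmid c$, so by Lemma~\ref{lem:m} we must be in case~(3)(ii), i.e.\ $n_1=3t$ with $t\in\Z$ and $t\not\equiv n_2\pmod{3}$; a fortiori $n_2\not\equiv -2t\pmod{9}$, so Theorem~\ref{theo:wild} applies.

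Next, in the subcase $t\not\equiv n_2\pmod{3}$, Theorem~\ref{theo:wild} requires integers $a_0,a_1$ with $ec=3(a_0^2-a_0a_1+a_1^2)$ and $a_0+a_1\zeta\mid A_n$ in $\Z[\zeta]$. Since $ec=3$, the equation reduces to $a_0^2-a_0a_1+a_1^2=1$, whose integer solutions are precisely the six units of $\Z[\zeta]$; the choice $(a_0,a_1)=(1,0)$ certainly satisfies both conditions, as $1$ divides $A_n$ trivially. Substituting these values into the formula
\[
\alpha=\frac{1}{ec^2}\bigl(3n_2(a_0\rho_n+a_1\rho_n')-n_1(a_0+a_1)\bigr)
\]
of Theorem~\ref{theo:wild} with $ec^2=3$ yields exactly $\alpha=(3n_2\rho_n-n_1)/3$, and the conclusion $\calO_{L_n}=\Z[G]\,\alpha\oplus\Z=\calA_{L_n/\Q}(\alpha+1)$ follows directly from that theorem.

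There is essentially no obstacle here: the entire content is a bookkeeping exercise that identifies which case of Lemma~\ref{lem:m} (hence of Theorem~\ref{theo:wild}) the hypotheses force, and then a trivial verification that the unit $1\in\Z[\zeta]$ furnishes an admissible $(a_0,a_1)$. If anything deserves a line of caution, it is checking that $e=3$, $c=1$ is indeed the only possibility; this is where the assumption $3\nmid d$ is used, to exclude $e=1$ (which would demand $c^3=9$, impossible in $\Z_{>0}$) and to rule out absorbing a factor of $3$ into $d$.
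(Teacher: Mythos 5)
Your argument is correct and follows essentially the same route as the paper: identify $e=3$, $c=1$ from $\Delta_n=3^2d$ with $3\nmid d$, use Lemma~\ref{lem:m} to force case (3)(ii) of (\ref{eq:f}), verify that $(a_0,a_1)=(1,0)$ satisfies $ec=3(a_0^2-a_0a_1+a_1^2)$ and $a_0+a_1\zeta\mid A_n$, and specialize the formula of Theorem~\ref{theo:wild}. The only point worth noting is that your implication ``$t\not\equiv n_2 \pmod 3$ a fortiori gives $n_2\not\equiv-2t\pmod 9$'' is indeed valid (since $n_2\equiv-2t\pmod 9$ would force $n_2\equiv t\pmod 3$), so all hypotheses of the theorem are verified.
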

 \section{Examples}\label{sec:ex}
 
 We give examples for $n_2=2,3,5,7,11$ and $-(n_2-1)\leq n_1 \leq n_2-1$ by using Magma.
  For these $(n_1,n_2)$,
 Tables~\ref{table:2}-\ref{table:11} give the discriminant $D_{L_n}$ for $n=n_1/n_2$, the corresponding case of (\ref{eq:f}), $\Delta_n$,
 $(a_0, a_1)$ and $\alpha$ in Theorem~\ref{theo:tame} (resp. Theorem~\ref{theo:wild}) in the case of (1) or (2) (resp. (3) (i) or (ii)), and Galois module structure of $\calO_{L_n}$.
 
 For example, for $(n_1,n_2)=(1,2)$ in Table~\ref{table:2}, the discriminant for $n=n_1/n_2=1/2$ is $D_{L_n}=43^2$ and Case (1) of (\ref{eq:f}) holds.
 Since $\Delta_n=43$, we have $d=43, e=1, c=1$, and hence we obtain $(1,0)$ as a pair $(a_0,a_1)$ satisfying $ec=a_0^2-a_0a_1+a_1^2$ and $a_0+a_1\zeta | A_n$.
 From $\varepsilon=\left(\frac{n_1(a_0+a_1)}{3} \right)=\left(\frac{1}{3} \right)=1$, in Theorem~\ref{theo:tame}, $\alpha=2\rho$.
From Theorem~\ref{theo:tame},  for this $\alpha$, the Galois module structure  of the ring of integers of $\calO_{L_n}$ is given by $\calO_{L_n} =\Z [G] \alpha$.

For $(n_1,n_2)=(6,11)$ in Table~\ref{table:11}, the discriminant for $n=n_1/n_2=6/11$ is $D_{L_n}=3^4 \cdot 7^2$ and Case (3) (i) of (\ref{eq:f}) holds.
 Since $\Delta_n=3^3 \cdot 7^2$, we have $d=1, e=7, c=3$, and hence we obtain $(5,1)$ as a pair $(a_0,a_1)$ satisfying $ec=a_0^2-a_0a_1+a_1^2$ and $a_0+a_1\zeta | A_n$.
 In Theorem~\ref{theo:wild}, $\alpha=(55/21) \rho+(11/21) \rho'-4/7$.
From Theorem~\ref{theo:wild},  for this $\alpha$, the Galois module structure  of the ring of integers of $\calO_{L_n}$ is given by $\calO_{L_n} =\Z [G] \alpha \oplus \Z$.

In the following table, using $(-n_1, -n_2)$ instead of $(n_1,n_2)$ yields $-\alpha$ as a generator.
 
 \begin{table}[h]
\caption{$n_2=2$}
 \begin{center}
 \begin{tabular}{ |  >{\centering} p{4.5em}|>{\centering} p{4em}|>{\centering} p{3em}|>{\centering} p{4em}|>{\centering} p{3em}|>{\centering} p{6em}|c|}
 \hline
\rule{0pt}{5mm}   $(n_1,n_2)$ & $D_{L_n}$ & Case & $\Delta_n$ & $(a_0,a_1)$ & $\alpha$ & \hspace{5mm} $\mathcal O_{L_n}$  \hspace{5mm} \\ 
\hline  \hline 
\rule{0pt}{5mm}   $(-1,2)$ & $31^2$ & (1) & $31$ & $(1,0)$ & $2\rho $ & $\Z [G] \alpha$ \\ 
\hline
\rule{0pt}{5mm}   $(1,2)$ & $43^2$ & (1) & $43$ & $(1,0)$ & $2\rho $ & $\Z [G] \alpha$ \\ 
\hline
\end{tabular}
\label{table:2}
\end{center}
\end{table}
 \begin{table}[H]
\caption{$n_2=3$}
 \begin{center}
\begin{tabular}{| >{\centering}p{4.5em}|>{\centering} p{4em}|>{\centering} p{3em}|>{\centering} p{4em}|>{\centering} p{3em}|>{\centering} p{6em}| c|}
\hline
\rule{0pt}{5mm}   $(n_1,n_2)$ & $D_{L_n}$ & Case & $\Delta_n$ & $(a_0,a_1)$ & $\alpha$ & \hspace{5mm} $\mathcal O_{L_n}$  \hspace{5mm} \\  \hline  \hline 
\rule{0pt}{5mm}   $(-2,3)$ & $67^2 $ & (1) & $67$ & $(1,0)$ & $3\rho +1 $ &   $\Z [G] \alpha$  \\ \hline
\rule{0pt}{5mm}   $(-1,3)$ & $73^2$ & (1) & $73$ & $(1,0)$ & $3\rho$ & $\Z [G] \alpha$ \\ \hline
\rule{0pt}{5mm}   $(1,3)$ & $7^2 \cdot 13^2$ & (1) & $7\cdot 13$ & $(1,0)$ & $3\rho $ &   $\Z [G] \alpha$  \\ \hline
\rule{0pt}{5mm}   $(2,3)$ & $103^2$ & (1) & $103$ & $(1,0)$ & $3\rho-1 $ & $\Z [G] \alpha$ \\ \hline
\end{tabular}
\label{table:3}
\end{center}
\end{table}
 \begin{table}[H]
\caption{$n_2=5$}
 \begin{center}
  \begin{tabular}{|  >{\centering} p{4.5em}|>{\centering} p{4em}|>{\centering} p{3em}|>{\centering} p{4em}|>{\centering} p{3em}|>{\centering} p{6em}| c |}
 \hline
\rule{0pt}{5mm}   $(n_1,n_2)$ & $D_{L_n}$ & Case & $\Delta_n$ & $(a_0,a_1)$ & $\alpha$ & $\mathcal O_{L_n}$ \\  \hline  \hline 
\rule{0pt}{5mm}   $(-4,5)$ & $181^2$ & (1) & $181$ & $(1,0)$ & $5\rho +1 $ & $\Z [G] \alpha$ \\ \hline
\rule{0pt}{5mm}   $(-3,5)$ & $3^4\cdot 7^2$ & (3) (i) & $3^3\cdot 7$ & $(1,-1)$ & $\frac{5}{3} \rho-\frac{5}{3} \rho' $ & $\Z [G] \alpha \oplus \Z$ \\ \hline
\rule{0pt}{5mm}   $(-2,5)$ & $199^2$ & (1) & $199$ & $(1,0)$ & $5\rho +1 $ & $\Z [G] \alpha$ \\ \hline
\rule{0pt}{5mm}   $(-1,5)$ & $211^2 $ & (1) & $211$ & $(1,0)$ & $5\rho $ & $\Z [G] \alpha$ \\ \hline
\rule{0pt}{5mm}   $(1,5)$ & $241^2$ & (1) & $241$ & $(1,0)$ & $5\rho $ & $\Z [G] \alpha$ \\ \hline
\rule{0pt}{5mm}   $(2,5)$ & $7^2\cdot 37^2$ & (1) & $7\cdot 37$ & $(1,0)$ & $5\rho-1 $ & $\Z [G] \alpha$ \\ \hline
\rule{0pt}{5mm}   $(3,5)$ & $3^4 \cdot 31^2$ & (3) (ii) & $3^2\cdot 31$ & $(1,0)$ & $5\rho -1 $ & $\Z [G] \alpha \oplus \Z $ \\ \hline
\rule{0pt}{5mm}   $(4,5)$ & $7^2 \cdot 43^2$ & (1) & $7\cdot 43$ & $(1,0)$ & $5\rho-1 $ & $\Z [G] \alpha$ \\ \hline
\end{tabular}
\label{table:5}
\end{center}
\end{table}
 \begin{table}[H]
\caption{$n_2=7$}
 \begin{center}
  \begin{tabular}{| >{\centering}p{4.5em}|>{\centering} p{4em}|>{\centering} p{3em}|>{\centering} p{4em}|>{\centering} p{3em}|>{\centering} p{6em}| c |}
 \hline
\rule{0pt}{5mm}   $(n_1,n_2)$ & $D_{L_n}$ & Case & $\Delta_n$ & $(a_0,a_1)$ & $\alpha$ & $\mathcal O_{L_n}$ \\  \hline  \hline 
\rule{0pt}{5mm}   $(-6,7)$ & $3^4 \cdot 13^2$ & (1) & $3^3 \cdot 13$ & $(1,-1)$ & $\frac{7}{3} \rho -\frac{7}{3} \rho'$ & $\Z [G] \alpha$ \\ \hline
\rule{0pt}{5mm}   $(-5,7)$ & $19^2$ & (1) & $19^2$ & $(2,5)$ & $ \frac{14}{19} \rho +\frac{35}{19} \rho' +\frac{18}{19} $ & $\Z [G] \alpha$ \\ \hline
\rule{0pt}{5mm}   $(-4,7)$ & $373^2$ & (1) & $373$ & $(1,0)$ & $7\rho +1  $ & $\Z [G] \alpha$ \\ \hline
\rule{0pt}{5mm}   $(-3,7)$ & $3^4 \cdot 43^2$ & (3) (ii)  & $3^2 \cdot 43$ & $(1,0)$ & $7\rho+1 $ & $\Z [G] \alpha \oplus \Z$ \\ \hline
\rule{0pt}{5mm}   $(-2,7)$ & $13^2 \cdot 31^2 $ & (1)& $13\cdot 31$ & $(1,0)$ & $7\rho +1 $ & $\Z [G] \alpha $ \\ \hline
\rule{0pt}{5mm}   $(-1,7)$ & $421^2$ & (1) & $421$ & $(1,0)$ & $7\rho$ & $\Z [G] \alpha $ \\ \hline
\rule{0pt}{5mm}   $(1,7)$ & $463^2$ & (1) & $463$ & $(1,0)$ & $7\rho $ & $\Z [G] \alpha$ \\ \hline
\rule{0pt}{5mm}   $(2,7)$ & $487^2$ & (1) & $487$ & $(1,0)$ & $7\rho-1 $ & $\Z [G] \alpha$ \\ \hline
\rule{0pt}{5mm}   $(3,7)$ & $19^2$ & (2) & $3^3 \cdot 19$ & $(1,-1)$ & $\frac{7}{9} \rho -\frac{7}{9} \rho'+\frac{1}{3} $ & $\Z [G] \alpha$ \\ \hline
\rule{0pt}{5mm}   $(4,7)$ & $541^2$ & (1) & $541$ & $(1,0)$ & $7\rho-1 $ & $\Z [G] \alpha$ \\ \hline
\rule{0pt}{5mm}   $(5,7)$ & $571^2$ & (1)& $571$ & $(1,0)$ & $7\rho -2 $ & $\Z [G] \alpha $ \\ \hline
\rule{0pt}{5mm}   $(6,7)$ & $3^4 \cdot 67^2$ & (3) (ii) & $3^2\cdot 67$ & $(1,0)$ & $7\rho-2 $ & $\Z [G] \alpha \oplus \Z$ \\ \hline
\end{tabular}
\label{table:7}
\end{center}
\end{table}
 \begin{table}[H]
\caption{$n_2=11$}
 \begin{center}
  \begin{tabular}{| >{\centering}p{4.5em}|>{\centering} p{4em}|>{\centering} p{3em}|>{\centering} p{4em}|>{\centering} p{3em}|>{\centering} p{6em}| c |}
 \hline
\rule{0pt}{5mm}   $(n_1,n_2)$ & $D_{L_n}$ & Case & $\Delta_n$ & $(a_0,a_1)$ & $\alpha$ & $\mathcal O_{L_n}$ \\  \hline  \hline 
\rule{0pt}{5mm}   $(-10,11)$ & $859^2$ & (1) & $859$ & $(1,0)$ & $11\rho +3 $ & $\Z [G] \alpha$ \\ \hline
\rule{0pt}{5mm}   $(-9,11)$ & $3^4 \cdot 97^2$ & (3) (ii)  & $3^2 \cdot 97 $ & $(1,0)$ & $11\rho+3 $ & $\Z [G] \alpha \oplus \Z$ \\ \hline
\rule{0pt}{5mm}   $(-8,11)$ & $7^2 \cdot 127^2$ & (1)  & $7\cdot 127 $ & $(1,0)$ & $11\rho +3  $ & $\Z [G] \alpha $ \\ \hline
\rule{0pt}{5mm}   $(-7,11)$ & $907^2$ & (1) & $907$ & $(1,0)$ & $11\rho+2 $ & $\Z [G]\alpha$ \\ \hline
\rule{0pt}{5mm}   $(-6,11)$ & $3^4\cdot 103^2$ & (3) (ii) & $3^2 \cdot 103 $ & $(1,0)$ & $11\rho +2 $ & $\Z [G] \alpha \oplus \Z$ \\ \hline
\rule{0pt}{5mm}   $(-5,11)$ & $13^2 \cdot 73^2 $ & (1) & $13\cdot 73 $ & $(1,0)$ & $11\rho +2 $ & $\Z [G] \alpha $ \\ \hline
\rule{0pt}{5mm}   $(-4,11)$ & $7^2 \cdot 139^2$ & (1) & $7\cdot 139 $ & $(1,0)$ & $11\rho +1 $ & $\Z [G] \alpha$ \\ \hline
\rule{0pt}{5mm}   $(-3,11)$ & $37^2$ & (2) & $3^3 \cdot 37 $ & $(1,-1)$ & $\frac{11}{9} \rho -\frac{11}{9} \rho'-\frac{1}{3} $ & $\Z [G] \alpha$ \\ \hline
\rule{0pt}{5mm}   $(-2,11)$ & $13^2 \cdot 79^2$ & (1)& $13 \cdot 79$ & $(1,0)$ & $11\rho+1$ & $\Z [G] \alpha $ \\ \hline
\rule{0pt}{5mm}   $(-1,11)$ & $7^2 \cdot 151^2$ & (1) & $7 \cdot 151$ & $(1,0)$ & $11\rho $ & $\Z [G] \alpha$ \\ \hline
\rule{0pt}{5mm}   $(1,11)$ & $1123^2$ & (1) & $1123$ & $(1,0)$ & $11\rho $ & $\Z [G] \alpha$ \\ \hline
\rule{0pt}{5mm}   $(2,11)$ & $19^2 \cdot 61^2$ & (1) & $19\cdot 61$ & $(1,0)$ & $11\rho-1 $ & $\Z [G] \alpha$ \\ \hline
\rule{0pt}{5mm}   $(3,11)$ & $3^4 \cdot 7^2 \cdot 19^2$ & (3) (ii) & $3^2 \cdot 7 \cdot 19$ & $(1,0)$ & $11\rho -1  $ & $\Z [G] \alpha \oplus \Z$ \\ \hline
\rule{0pt}{5mm}   $(4,11)$ & $1237^2$ & (1) & $1237$ & $(1,0)$ & $11\rho-1 $ & $\Z [G] \alpha$ \\ \hline
\rule{0pt}{5mm}   $(5,11)$ & $1279^2$ & (1)& $1279$ & $(1,0)$ & $11\rho -2 $ & $\Z [G] \alpha $ \\ \hline
\rule{0pt}{5mm}   $(6,11)$ & $3^4 \cdot 7^2$ & (3) (i) & $3^3 \cdot 7^2$ & $(5,1)$ & $\frac{55}{21} \rho+\frac{11}{21}\rho'-\frac{4}{7}$ & $\Z [G] \alpha \oplus \Z$ \\ \hline
\rule{0pt}{5mm}   $(7,11)$ & $37^2$ & (1) & $37^2$ & $(7,3)$ & $\frac{77}{37}\rho+\frac{33}{37}\rho'-\frac{11}{37} $ & $\Z [G] \alpha$ \\ \hline
\rule{0pt}{5mm}   $(8,11)$ & $13^2 \cdot 109^2$ & (1) & $13 \cdot 109 $ & $(1,0)$ & $11\rho-3$ & $\Z [G] \alpha$ \\ \hline
\rule{0pt}{5mm}   $(9,11)$ & $3^4 \cdot 163^2$ & (3) (ii) & $3^2 \cdot 163$ & $(1,0)$ & $11\rho-3 $ & $\Z [G] \alpha \oplus \Z$ \\ \hline
\rule{0pt}{5mm}   $(10,11)$ & $7^2 \cdot 31^2$ & (1) & $7^2 \cdot 31$ & $(3,1)$ & $\frac{33}{7} \rho+\frac{11}{7} \rho'-\frac{11}{7}$ & $\Z [G] \alpha$ \\ \hline
\end{tabular}
\label{table:11}
\end{center}
\end{table}
 

\end{document}